\theoremstyle{plain}
\newtheorem{thm}{Theorem}[section]
\newtheorem{theorem}[thm]{Theorem}
\newtheorem{prop}[thm]{Proposition}
\theoremstyle{definition}
\newtheorem{ex}[thm]{Example}
\theoremstyle{remark}
\newtheorem*{remark*}{Remark}
\numberwithin{equation}{section}
        \newcommand{\field}[1]{{\mathbb{#1}}}
        \newcommand{\NN}{\field{N}}
        \newcommand{\ZZ}{\field{Z}}
        \newcommand{\RR}{\field{R}}
        \newcommand{\CC}{\field{C}}
\begin{document}

\title[Semiclassical trace formula]{Semiclassical trace formula for the Bochner-Schr\"odinger operator}

\author[Y. A. Kordyukov]{Yuri A. Kordyukov}
\address{Institute of Mathematics, Ufa Federal Research Centre, Russian Academy of Sciences, 112~Chernyshevsky str., 450008 Ufa, Russia} \email{yurikor@matem.anrb.ru}

\subjclass[2000]{Primary 58J37; Secondary 35P20}

\keywords{Bochner-Schr\"odinger operator, trace formula, semiclassical asymptotics, manifolds of bounded geometry}

\begin{abstract}
We study the semiclassical Bochner-Schr\"odinger operator $H_{p}=\frac{1}{p^2}\Delta^{L^p\otimes E}+V$ on tensor powers $L^p$ of a Hermitian line bundle $L$ twisted by a Hermitian vector bundle $E$ on a Riemannian manifold of bounded geometry. For any function $\varphi\in C^\infty_c(\mathbb R)$, we consider the bounded linear operator $\varphi(H_p)$ in $L^2(X,L^p\otimes E)$ defined by the spectral theorem. We prove that its smooth Schwartz kernel on the diagonal admits a complete asymptotic expansion in powers of $p^{-1}$ in the semiclassical limit $p\to \infty$. In particular, when the manifold is compact, we get a complete asymptotic expansion for the trace of $\varphi(H_p)$. 
\end{abstract}

\date{}

 \maketitle
\section{Introduction}
Let $(X,g)$ be a complete Riemannian manifold of dimension $d$, $(L,h^L)$ a Hermitian line bundle on $X$ with a Hermitian connection $\nabla^L$ and $(E,h^E)$ a Hermitian vector bundle on $X$ with a Hermitian connection $\nabla^E$. We suppose that $(X, g)$ is a Riemannian manifold of bounded geometry and the bundles $L$ and $E$ have bounded geometry. This means that the curvatures $R^{TX}$, $R^L$ and $R^E$ of the Levi--Civita connection $\nabla^{TX}$, connections $\nabla^L$ and $\nabla^E$, respectively, and their covariant derivatives of any order are uniformly bounded on $X$ in the norm induced by $g$, $h^L$ and $h^E$, and the injectivity radius $r_X$ of $(X, g)$ is positive. 

For any $p\in \NN$, let $L^p:=L^{\otimes p}$ be the $p$th tensor power of $L$ and let
\[
\nabla^{L^p\otimes E}: {C}^\infty(X,L^p\otimes E)\to
{C}^\infty(X, T^*X \otimes L^p\otimes E)
\] 
be the Hermitian connection on $L^p\otimes E$ induced by $\nabla^{L}$ and $\nabla^E$. The Bochner Laplacian $\Delta^{L^p\otimes E}$ acts on $C^\infty(X,L^p\otimes E)$ by the formula
\[
\Delta^{L^p\otimes E}=\big(\nabla^{L^p\otimes E}\big)^{\!*}\,
\nabla^{L^p\otimes E},
\]
where $\big(\nabla^{L^p\otimes E}\big)^{\!*}: {C}^\infty(X,T^*X\otimes L^p\otimes E)\to
{C}^\infty(X,L^p\otimes E)$ is the formal adjoint of  $\nabla^{L^p\otimes E}$. Let $V\in C^\infty(X,\operatorname{End}(E))$ be a self-adjoint endomorphism of $E$. 
We consider a second order differential operator $H_p$ acting on $C^\infty(X,L^p\otimes E)$ by
\begin{equation}\label{e:Hp}
H_{p}=\frac{1}{p^2}\Delta^{L^p\otimes E}+V. 
\end{equation} 

\begin{ex}\label{ex:op}
Assume that the Hermitian line bundle $(L,h^L)$ is trivial: $$L=X\times \mathbb C,\quad |(x,z)|_{h^{L}}=|z|^2,\quad (x,z)\in X\times \mathbb C,
$$ the Hermitian connection $\nabla^L$ has the form $\nabla^L=d-i \mathbf A$ with a real-valued 1-form $\mathbf A$ (the magnetic potential), the bundle $(E,h^E)$ is the trivial Hermitian line bundle, and the connection $\nabla^E$ is trivial: $\nabla^E=d$. Then the operator $H_p$ coincides with the semiclassical magnetic Schr\"odinger operator 
\[
H_p=(ih d+\mathbf A)^*(ih d+\mathbf A)+V, \quad h=\frac{1}{p},\quad p\in \NN. 
\]
The magnetic field $\mathbf B=d\mathbf A$ is related with the curvature $R^L$ of $\nabla^L$ by the formula
\[
\mathbf B=iR^L. 
\]

If $X$ is the Euclidean space $\RR^{d}$ with coordinates $Z=(Z_1,\ldots,Z_{d})$, we can write the 1-form $\bf A$ as
\[
{\bf A}= \sum_{j=1}^{d}A_j(Z)\,dZ_j,
\]
the matrix of the Riemannian metric $g$ as $g(Z)=(g_{j\ell}(Z))_{1\leq j,\ell\leq d}$
and its inverse as $g(Z)^{-1}=(g^{j\ell}(Z))_{1\leq j,\ell\leq d}$.
Denote $|g(Z)|=\det(g(Z))$. Then $\bf B$ is given by 
\[
{\bf B}=\sum_{j<k}B_{jk}\,dZ_j\wedge dZ_k, \quad
B_{jk}=\frac{\partial A_k}{\partial Z_j}-\frac{\partial
A_j}{\partial Z_k}.
\]
Moreover, the operator $H_p$ has the form
\[
H_p=\frac{1}{\sqrt{|g|}}\sum_{1\leq j,\ell\leq d}\left(i h\frac{\partial}{\partial Z_j}+A_j\right) \left[\sqrt{|g|} g^{j\ell} \left(i h\frac{\partial}{\partial Z_\ell}+A_\ell\right)\right]+V.
\]
Our assumptions hold, if for any $\alpha \in \ZZ^{d}_+$ and $1\leq j,\ell\leq d$, we have 
\[
\sup_{Z\in \RR^{d}}|\partial^\alpha g_{j\ell}(Z)|<\infty, \quad \sup_{Z\in \RR^{d}}|\partial^\alpha B_{j\ell}(Z)|<\infty, 
\]
\[
\sup_{Z\in \RR^{d}}|\partial^\alpha V(Z)|<\infty, 
\]
and the matrix $(g_{j\ell}(Z))$ is positive definite uniformly on $Z\in \RR^{d}$. 

As shown in \cite[Section 4.4]{HP10}, using Agmon exponential decay estimates, the case of general $\mathbf B$ and $V$ can be reduced to the case of $\mathbf B$ and $V$, satisfying the above conditions. 
\end{ex}

Since $(X,g)$ is complete, the operator $H_p$ is essentially self-adjoint  in the Hilbert space $L^2(X,L^p\otimes E)$ with initial domain  $C^\infty_c(X,L^p\otimes E)$, see \cite[Theorem 2.4]{ko-ma-ma}. We still denote by $H_p$ its unique self-adjoint extension. For any function $\varphi\in C^\infty_c(\mathbb R)$, one can define by the spectral theorem a bounded linear operator $\varphi(H_p)$ in $L^2(X,L^p\otimes E)$. This operator has smooth Schwartz kernel $$K_{\varphi(H_{p})}\in {C}^{\infty}(X\times X, \pi_1^*(L^p\otimes E)\otimes \pi_2^*(L^p\otimes E)^*)$$ with respect to the Riemannian volume form $dv_X$.  (Here $\pi_1,\pi_2 : X\times X\to X$ are the natural projections.)  

The main result of the paper is the following theorem. 

\begin{thm}\label{c:main} 
The following asymptotic expansion holds true uniformly in $x_0\in X$:
\[
K_{\varphi(H_{p})} (x_0,x_0)\sim p^{d}\sum_{r=0}^{\infty}f_{r}(x_0)p^{-r},\quad p\to \infty, 
\]
where $f_{r},$ $r=0,1,\ldots$, is a smooth section of the vector bundle $\operatorname{End}(E)$ on $X$. The leading coefficient of this expansion is given by
\begin{equation}\label{e:f0-d}
f_{0}(x_0)=\frac{1}{(2\pi)^{d}} \int_{\RR^{d}} \varphi(|\xi|^2 +V(x_0))d\xi.
\end{equation}
The next coefficients have the form
\begin{equation}\label{e:fr2-form}
f_r(x_0)= \frac{1}{(2\pi)^{d}} \sum_{\ell=1}^{N} \int_{\RR^{d}} P_{r,\ell,x_0}(\xi) \varphi^{(\ell-1)}(|\xi|^2+V(x_0))d\xi,\quad r\geq 1,
\end{equation}
{where $P_{r, \ell,x_0}(\xi)$ is a universal polynomial depending on a finite number of derivatives of $V$ and the curvatures $R^{TX}$, $R^L$ and $R^E$ in $x_0$.} 
\end{thm}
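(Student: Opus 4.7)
The strategy is to interpret $H_p$ as a semiclassical pseudodifferential operator with small parameter $h=1/p$ in a local trivialization of the bundles, and then apply semiclassical functional calculus via the Helffer--Sj\"ostrand formula. As a first step I would establish rapid off-diagonal decay of the Schwartz kernel: a standard argument (for instance via Agmon-type resolvent estimates combined with the Helffer--Sj\"ostrand formula, or via finite propagation speed for the wave group of $\sqrt{H_p + C}$) yields $\|K_{\varphi(H_p)}(x, y)\| = O(p^{-\infty})$ whenever $d(x,y) \geq c$ for some fixed $c > 0$, uniformly in $x,y$ thanks to bounded geometry. Choosing $\rho \in C^\infty_c(X)$ supported in a ball $B(x_0, r_0)$ of radius less than the injectivity radius with $\rho \equiv 1$ near $x_0$, this reduces the problem to computing $K_{\rho \varphi(H_p)\rho}(x_0, x_0)$ up to a uniform $O(p^{-\infty})$ error.

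\emph{Passage to a local model.} In geodesic normal coordinates $Z$ around $x_0$ and after trivializing $L$ and $E$ via parallel transport along radial geodesics, $H_p$ becomes a semiclassical differential operator $H_p^{(x_0)}$ on a neighbourhood of $0$ in $\RR^{d}$, with $h=1/p$. The connection 1-forms $\Gamma^L$, $\Gamma^E$ vanish at the origin, and their higher Taylor coefficients are determined by $R^L$, $R^E$ and their covariant derivatives at $x_0$, while the metric coefficients $g_{j\ell}$ admit Taylor expansions at $0$ determined by $R^{TX}(x_0)$ and its derivatives. After extending the coefficients outside the normal-coordinate ball in a way that preserves uniform-in-$x_0$ symbol estimates, $H_p^{(x_0)}$ becomes a uniformly elliptic semiclassical operator on $\RR^{d}$ whose semiclassical principal symbol $a_0(Z,\xi)$ equals $|\xi|^2 + V(x_0)$ at $Z = 0$.

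\emph{Helffer--Sj\"ostrand formula and parametrix.} With an almost holomorphic extension $\tilde\varphi$ of $\varphi$, I would write
\[
\varphi(H_p^{(x_0)}) = -\frac{1}{\pi}\int_{\CC}\bar\partial\tilde\varphi(z)\,(z - H_p^{(x_0)})^{-1}\,dL(z),
\]
and construct, for $z \in \supp \bar\partial\tilde\varphi$, a semiclassical parametrix for $(z - H_p^{(x_0)})^{-1}$ whose full symbol admits an expansion $r(Z,\xi,z;h) \sim \sum_{k\geq 0} h^{k} r_k(Z,\xi,z)$ with $r_0 = (z - a_0)^{-1}$ and each $r_k$ a finite sum of terms $Q_{k,\ell}(Z,\xi)(z - a_0(Z,\xi))^{-\ell}$, with $Q_{k,\ell}$ polynomial in $\xi$ depending on finitely many derivatives of the coefficients at $Z$. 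The diagonal value of the Schwartz kernel of the parametrix at $Z = 0$ equals $(2\pi h)^{-d}\int_{\RR^{d}} r(0,\xi,z;h)\,d\xi$; inserting this into the Helffer--Sj\"ostrand formula and using the Cauchy-type identity $-\pi^{-1}\int_{\CC}\bar\partial\tilde\varphi(z)(z - s)^{-\ell}\,dL(z) = \varphi^{(\ell-1)}(s)/(\ell-1)!$ converts each resolvent factor into a derivative of $\varphi$ evaluated at $|\xi|^2 + V(x_0)$, reproducing the claimed asymptotic expansion with leading term \eqref{e:f0-d} and higher coefficients of the form \eqref{e:fr2-form}.

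\emph{Main obstacle.} The chief technical challenge is uniformity in $x_0 \in X$: one must work within semiclassical symbol classes whose seminorms, parametrix remainder estimates, and adapted cutoff functions are all controlled independently of $x_0$. This is precisely where the bounded-geometry hypotheses and the positivity of the injectivity radius become indispensable, as they ensure that normal coordinates and parallel transport produce uniformly comparable local models at every point of $X$, and that the extension of $H_p^{(x_0)}$ to all of $\RR^{d}$ can be performed with uniform symbol bounds.
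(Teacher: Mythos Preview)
Your strategy is sound and would yield the theorem, but it diverges from the paper's method in one essential respect: you rely on a semiclassical pseudodifferential parametrix for the resolvent, whereas the paper deliberately \emph{avoids} pseudodifferential calculus. After the same localization step (finite propagation speed, passage to $H_p^{(x_0)}$ on $T_{x_0}X$ in normal coordinates and radial gauge), the paper instead performs the rescaling $Z\mapsto Z/h$, which converts $H_p^{(x_0)}$ into an operator $\mathcal H_h$ whose coefficients are smooth in $h$ down to $h=0$, with limit $\mathcal H^{(0)}=-\Delta+V(x_0)$. One then Taylor-expands $\mathcal H_h=\mathcal H^{(0)}+\sum_r h^r\mathcal H^{(r)}$, proves uniform Sobolev bounds for $\partial_h^r\varphi(\mathcal H_h)$ directly from the resolvent identity and elementary commutator estimates, and reads off the coefficients $F_r$ as $\lim_{h\to 0}\frac{1}{r!}\partial_h^r\varphi(\mathcal H_h)$; the Helffer--Sj\"ostrand formula enters only at this last stage, yielding $F_r$ as a sum of terms $D_{\ell,r}\,\varphi^{(\ell-1)}(\mathcal H^{(0)})$ with explicit differential operators $D_{\ell,r}$.

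What each route buys: your parametrix approach is the textbook semiclassical argument (Dimassi--Sj\"ostrand, Helffer--Robert) and makes the structure of the coefficients transparent from the symbol expansion, but it imports the full machinery of $h$-pseudodifferential symbol classes and composition formulas, and one must verify that this machinery is uniform over the family parametrized by $x_0\in X$. The paper's rescaling approach is more elementary---it needs only standard elliptic Sobolev estimates, the resolvent identity, and Sobolev embedding---and it is a direct adaptation of the Bismut--Lebeau/Ma--Marinescu technique used for Bergman kernel asymptotics, which is the point the authors wish to make. Both arrive at the same formula for $f_r$ after evaluating the kernel at the origin and using the Fourier representation of $\varphi^{(\ell-1)}(-\Delta+V(x_0))$.
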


As an immediate consequence of Theorem \ref{c:main}, we get the following semiclassical trace formula for the operator $H_p$ in the case when $X$ is compact. 

\begin{thm}\label{t:trace} Assume that $X$ is compact. There exists a sequence of distributions $f_r \in \mathcal D^\prime(\mathbb R), r=0,1,\ldots$, such that for any $\varphi\in C^\infty_c(\mathbb R)$, we have an asymptotic expansion
\begin{equation}\label{e:trace}
\operatorname{tr} \varphi(H_{p})\sim p^{d}\sum_{r=0}^{\infty}f_{r}(\varphi) p^{-r}, \quad p \to \infty.
\end{equation}
\end{thm}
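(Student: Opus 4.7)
The strategy is to deduce Theorem \ref{t:trace} from Theorem \ref{c:main} by integrating the pointwise diagonal asymptotic expansion over the compact manifold $X$, and then to verify that the resulting coefficients depend distributionally on $\varphi$.

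First, since $X$ is compact, $\varphi\in C^\infty_c(\mathbb R)$, and the Schwartz kernel $K_{\varphi(H_p)}$ is smooth on $X\times X$, the operator $\varphi(H_p)$ is trace class and
\[
\operatorname{tr}\varphi(H_p) = \int_X \operatorname{tr}_{E_{x_0}} K_{\varphi(H_p)}(x_0,x_0)\,dv_X(x_0),
\]
where we use the canonical identification $\operatorname{End}((L^p\otimes E)_{x_0}) \cong \operatorname{End}(E_{x_0})$ coming from $L^p_{x_0}\otimes (L^p_{x_0})^*\cong \mathbb C$, under which the fibrewise trace on $\operatorname{End}(L^p\otimes E)$ reduces to the one on $\operatorname{End}(E)$.

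Next, I would apply the uniform (in $x_0\in X$) expansion of Theorem \ref{c:main}. Uniformity gives, for each $N\geq 1$, a constant $C_N=C_N(\varphi)$ with
\[
\sup_{x_0\in X}\Bigl|K_{\varphi(H_p)}(x_0,x_0) - p^d\sum_{r=0}^{N-1} f_r(x_0)\,p^{-r}\Bigr|_{\operatorname{End}(E_{x_0})} \leq C_N\,p^{d-N}.
\]
Since $\operatorname{vol}(X)<\infty$, term-by-term integration against $dv_X$ is justified. Setting
\[
f_r(\varphi) := \int_X \operatorname{tr}_{E_{x_0}} f_r(x_0)\,dv_X(x_0),
\]
this yields exactly the asymptotic expansion \eqref{e:trace}.

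Finally, I would check that $\varphi\mapsto f_r(\varphi)$ is a distribution on $\mathbb R$. Starting from \eqref{e:f0-d}, \eqref{e:fr2-form}, diagonalising the self-adjoint endomorphism $V(x_0)$ pointwise with eigenvalues $\lambda_j(x_0)$, passing to radial coordinates in each fibre integral and substituting $t=|\xi|^2+\lambda_j(x_0)$, one rewrites
\[
\operatorname{tr}_{E_{x_0}} f_r(x_0) = \sum_{\ell=1}^{N}\int_{\mathbb R} a_{r,\ell}(t,x_0)\,\varphi^{(\ell-1)}(t)\,dt
\]
with smooth (jointly in $(t,x_0)$) densities of compact $t$-support bounded in terms of the sup-norm of $V$ over $X$. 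Integrating over the compact $X$ and invoking Fubini then represents $f_r\in\mathcal D'(\mathbb R)$ as a finite sum of derivatives of integrable functions with compact support, which is manifestly a finite-order distribution continuous on each $C^N_c(K)$.

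The only step that requires care is the uniformity of the diagonal expansion with respect to $x_0\in X$, which is precisely the content of Theorem \ref{c:main}. Once that is in hand, Theorem \ref{t:trace} is essentially a corollary requiring no new analytic input beyond Fubini and the compactness of $X$; the substantive work lives entirely in the pointwise statement.
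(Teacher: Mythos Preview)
Your approach is correct and is exactly what the paper intends: it states Theorem~\ref{t:trace} as an ``immediate consequence'' of Theorem~\ref{c:main} and gives no further proof, so your integration of the uniform diagonal expansion over the compact $X$ is precisely the argument being invoked.

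One small technical slip in your distributional verification: after the radial substitution $t=|\xi|^2+\lambda_j(x_0)$, the resulting densities $a_{r,\ell}(t,x_0)$ are of the form $(t-\lambda_j(x_0))_+^{(d+k)/2-1}$ (times a bounded factor), which are neither smooth in $t$ nor compactly supported; they are supported on a half-line and have an algebraic singularity at the endpoint when $d$ is odd or small. This does not damage the conclusion, since these densities are locally integrable with polynomial growth and $\varphi$ is compactly supported, so the pairing is well defined and manifestly continuous on $C^\infty_c(\mathbb R)$. Simply replace ``smooth with compact $t$-support'' by ``locally integrable with polynomial growth'' and your argument goes through unchanged.
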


In the Euclidean space $\mathbb R^d$ equipped with the canonical Euclidean metric (see Example~\ref{ex:op}), the semiclassical trace formula for the magnetic Schr\"odinger operator was established by Helffer and Robert \cite{HR83}, using  pseudodifferential Weyl calculus. Here the expression for the coefficients was given in terms of the magnetic potential $\mathbf A$. More precise information on the coefficients is given in \cite{HR90}.  However, by gauge invariance, the trace and its expansion should depend only on $\mathbf B$. A gauge invariant expression for the coefficients in terms of  $\mathbf B$ was proved by Helffer and Purice \cite{HP10}, using the magnetic pseudodifferential calculus developed in \cite{IMP07,MP04,MPR07} (see also references therein). In \cite{CdV12}, Colin de Verdi\`ere suggested a simplified version of the expansion replacing the (non-unique) coefficients by functions which are uniquely defined and are given by universal $O(d)$-invariant polynomials of the Taylor expansions of $\mathbf B$ and $V$ at the point $x$.

Our main results, Theorems~\ref{c:main} and \ref{t:trace}, extend the results of \cite{HP10}, but their proofs are quite different. We don't use any pseudodifferential techniques (to our knowledge, there is no constructions of magnetic pseudodifferential calculus on (noncompact) manifolds). Instead, we use a modification of the approach developed for the asymptotic analysis of generalized Bergman kernels by Dai-Liu-Ma and Ma-Marinescu \cite{dai-liu-ma,ma-ma:book,ma-ma08}, which was inspired by Bismut-Lebeau localization technique \cite{BL}, in combination with the functional calculus based on the Dynkin-Helffer-Sj\"{o}strand formula. Such a strategy has been applied in the close settings in \cite{Savale17}, \cite{Marinescu-Savale} and \cite{Bochner-trace}. Note that we do not require the non-degeneracy assumption on the curvature $R^L$ as in \cite{dai-liu-ma,ma-ma:book,ma-ma08}. Recall that, in \cite{ma-ma08,Bochner-trace}, the Bochner-Schr\"odinger operator is considered in an asymptotic regime, which is different from \eqref{e:Hp}, namely, the operator has the form
\[
\mathfrak H_{p}=\frac{1}{p}\Delta^{L^p\otimes E}+V. 
\] 
The key observation of our paper is that the operator $H_p$ in the form \eqref{e:Hp} can be treated in a similar way as in \cite{ma-ma08,Bochner-trace}, avoiding any use of pseudodifferential techniques.  

As in \cite{ma-ma08}, our main technical result, Theorem~\ref{t:main} below, is stronger than Theorem~\ref{c:main}. It states the existence of a complete asymptotic expansion for the Schwartz kernel $K_{\varphi(H_{p})}$ as $p\to \infty$ in some neighborhood of the diagonal (dependent of $p$) called the near off-diagonal asymptotic expansion. We believe that, using weighted estimates as in \cite{Bochner-trace}, one could establish the full off-diagonal expansion for $K_{\varphi(H_{p})}$,  that is, a complete asymptotic expansion in a fixed neighborhood of the diagonal (independent of $p$), but we don't discuss this issue here. 
Let us state the result in more detail. 

First, we introduce normal coordinates near an arbitrary point $x_0\in X$. 
We denote by $B^{X}(x_0,r)$ and $B^{T_{x_0}X}(0,r)$ the open balls in $X$ and $T_{x_0}X$ with center $x_0$ and radius $r$, respectively. We identify $B^{T_{x_0}X}(0,r_X)$ with $B^{X}(x_0,r_X)$ via the exponential map $\exp^X_{x_0}: T_{x_0}X \to X$. Furthermore, we choose trivializations of the bundles $L$ and $E$ over $B^{X}(x_0,r_X)$,   identifying their fibers $L_Z$ and $E_Z$ at $Z\in B^{T_{x_0}X}(0,r_X)\cong B^{X}(x_0,r_X)$ with the spaces  $L_{x_0}$ and $E_{x_0}$ by parallel transport with respect to the connections $\nabla^L$ and $\nabla^E$ along the curve $\gamma_Z : [0,1]\ni u \to \exp^X_{x_0}(uZ)$.

Consider the fiberwise product $TX\times_X TX=\{(Z,Z^\prime)\in T_{x_0}X\times T_{x_0}X : x_0\in X\}$. Let $\pi : TX\times_X TX\to X$ be the natural projection given by  $\pi(Z,Z^\prime)=x_0$. The Schwartz kernel $K_{\varphi(H_{p})}$ induces a smooth section $K_{\varphi(H_{p}),x_0}(Z,Z^\prime)$ of the vector bundle $\pi^*(\operatorname{End}(E))$ on $TX\times_X TX$ defined for all $x_0\in X$ and $Z,Z^\prime\in T_{x_0}X$ with $|Z|, |Z^\prime|<r_X$:
\begin{equation}\label{e:Kphix0}
K_{\varphi(H_p),x_0}(Z,Z^\prime)=K_{\varphi(H_p)}(\exp^X_{x_0}(Z),\exp^X_{x_0}(Z^\prime)). 
\end{equation}

Let $dv_{X,x_0}$ denote the Riemannian volume form of the Euclidean space $(T_{x_0}X, g_{x_0})$. We define a smooth function $\kappa_{x_0}$ on $B^{T_{x_0}X}(0,r_X)\cong B^{X}(x_0,r_X)$ by the equation
\begin{equation}\label{e:def-kappa}
dv_{X}(Z)=\kappa_{x_0}(Z)dv_{X,x_0}(Z), \quad Z\in B^{T_{x_0}X}(0,r_X). 
\end{equation}

\begin{theorem}\label{t:main}
There exists a sequence $F_{r,x_0}(Z,Z^\prime),$ $r=0,1,\ldots$, of smooth sections of the vector bundle $\pi^*(\operatorname{End}(E))$ on $TX\times_X TX$ such that, for any $j,m\in \mathbb N$ and $\sigma>0$, there exists $C>0$ such that, for any $p\geq 1$, $x_0\in X$ and $Z,Z^\prime\in T_{x_0}X$, $|Z|, |Z^\prime|<\sigma/p$, 
\begin{multline}\label{e:main-exp}
\sup_{|\alpha|+|\alpha^\prime|\leq m}\Bigg|\frac{\partial^{|\alpha|+|\alpha^\prime|}}{\partial Z^\alpha\partial Z^{\prime\alpha^\prime}}\Bigg(p^{-d}K_{\varphi(H_{p}),x_0}(Z,Z^\prime)\\
-\sum_{r=0}^jF_{r,x_0}(pZ, pZ^\prime)\kappa_{x_0}^{-\frac 12}(Z)\kappa_{x_0}^{-\frac 12}(Z^\prime)p^{-r}\Bigg) \Bigg|\leq Cp^{-j+m-1}.
\end{multline}
\end{theorem}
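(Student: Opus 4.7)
\textbf{Proof plan for Theorem~\ref{t:main}.}
The plan is to transport the problem to the fixed Euclidean space $T_{x_0}X\cong\RR^d$ via normal coordinates and a $p$-dependent rescaling, to reduce it to a perturbative analysis of the flat model $L_0=-\Delta_{\RR^d}+V(x_0)$, and then to convert resolvent information into kernel information via the Dynkin--Helffer--Sj\"ostrand formula. This follows the Bismut--Lebeau/Ma--Marinescu scheme of \cite{dai-liu-ma,ma-ma:book,ma-ma08,Bochner-trace}, simplified by the fact that here the rescaled model is the Euclidean Laplacian rather than a Landau-type Hamiltonian.

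First I would use finite propagation speed of $\cos(t\sqrt{H_p+C})$ for large $C>0$, together with an almost-analytic extension argument for $\varphi$, to show that the contribution to $K_{\varphi(H_p),x_0}(Z,Z')$ for $|Z|,|Z'|<\sigma/p$ is, up to an $O(p^{-\infty})$ error uniform in $x_0$, localized inside a fixed geodesic ball $B^X(x_0,\varepsilon)$. In normal coordinates on that ball, with parallel-transport trivialization of $L^p\otimes E$, the connection on $L^p$ becomes $d+p\Gamma^L$ with $\Gamma^L_j(Z)=\frac12 R^L_{x_0}(Z,\partial_j)+O(|Z|^2)$, and conjugation by $\kappa_{x_0}^{1/2}$ switches $L^2(dv_X)$ to $L^2(dv_{X,x_0})$, absorbing the density factors in \eqref{e:main-exp}. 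I then rescale by the unitary $(S_p u)(Z):=p^{d/2}u(pZ)$, which satisfies $S_p^{-1}\partial_j S_p=p\,\partial_j$ and $S_p^{-1}f(Z)S_p=f(Z/p)$, after extending all coefficient functions from the ball to $\RR^d$ in a way that preserves bounded geometry uniformly in $x_0$. Taylor expansion at the origin of $g^{jk}$, $\Gamma^L$, $\Gamma^E$, $V$ and $\kappa_{x_0}$ then gives
\[
H_p^{(x_0)}:=S_p^{-1}\kappa_{x_0}^{1/2}H_p\kappa_{x_0}^{-1/2}S_p=L_0+\sum_{r=1}^{\infty}p^{-r}\cO_r,
\]
where $L_0=-\Delta_{\RR^d}+V(x_0)$ and each $\cO_r$ is a second-order differential operator whose coefficients are polynomials in $Z$ with universal coefficients in the derivatives at $x_0$ of $R^{TX}$, $R^L$, $R^E$ and $V$.

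To pass from $H_p^{(x_0)}$ to $\varphi(H_p^{(x_0)})$ I would use the Dynkin--Helffer--Sj\"ostrand formula
\[
\varphi(H_p^{(x_0)})=-\frac{1}{\pi}\int_{\CC}\bar\partial\tilde\varphi(z)\,(z-H_p^{(x_0)})^{-1}\,dL(z),
\]
with $\tilde\varphi$ an almost analytic extension of $\varphi$, and expand the resolvent as
\[
(z-H_p^{(x_0)})^{-1}=(z-L_0)^{-1}+\sum_{r=1}^{k}p^{-r}\cJ_r(z)+p^{-k-1}\cE_{k,p}(z),
\]
where each $\cJ_r(z)$ is a finite sum of products $(z-L_0)^{-1}\cO_{r_1}(z-L_0)^{-1}\cdots\cO_{r_s}(z-L_0)^{-1}$ with $r_1+\cdots+r_s=r$. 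Integrating term by term against $\bar\partial\tilde\varphi$ and reading off Schwartz kernels produces the sections $F_{r,x_0}$; in particular $F_{0,x_0}$ is the kernel of $\varphi(L_0)$, computable by Fourier transform, which gives \eqref{e:f0-d} on the diagonal. Undoing the rescaling $S_p$ and the conjugation $\kappa_{x_0}^{\pm 1/2}$ yields \eqref{e:main-exp}.

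The main obstacle is uniform control of the remainder $\cE_{k,p}(z)$ in a norm strong enough to yield the pointwise $C^m$-estimate \eqref{e:main-exp} on kernels. Unlike the generalized Bergman kernel setting, where $L_0$ is a harmonic oscillator with compact resolvent and Gaussian-decaying kernel, here $L_0=-\Delta_{\RR^d}+V(x_0)$ has purely continuous spectrum and the perturbations $\cO_r$ have polynomial-growth coefficients; the required bounds therefore take the form of weighted Sobolev estimates
\[
\bigl\|(1+|Z|)^{-N}(z-L_0)^{-1}(1+|Z|)^{N}\bigr\|_{H^{s}\to H^{s+2}}\leq C_{N,s}\,|\Im z|^{-M_N},
\]
uniform in $x_0$ (where the bounded-geometry hypothesis enters), together with commutator bounds that keep the polynomial weights from accumulating too fast when composing $k$ resolvents and $k$ perturbations. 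The fast decay of $\bar\partial\tilde\varphi$ as $\Im z\to 0$ then absorbs the negative powers of $|\Im z|$, and Sobolev embedding on $\RR^d$ converts the operator estimate into the pointwise bound $Cp^{-j+m-1}$ on the $m$-th derivatives in the rescaled variable.
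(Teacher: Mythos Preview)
Your proposal is correct and follows essentially the same route as the paper: localization via finite propagation speed, conjugation by $\kappa_{x_0}^{1/2}$ and rescaling to the fiber, Taylor expansion of the rescaled operator around the flat model $-\Delta_{\RR^d}+V(x_0)$, the Dynkin--Helffer--Sj\"ostrand formula, weighted Sobolev resolvent bounds, and Sobolev embedding to pass to kernels. The paper organizes the expansion slightly differently---it differentiates $\varphi(\mathcal H_h)$ in $h$ and applies Taylor's formula with integral remainder (Theorems~\ref{est-rem} and~\ref{t:thm7.2}) rather than writing an explicit Neumann series for the resolvent---but formula~\eqref{diff} shows these are two presentations of the same computation.

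One point deserves sharpening: you state the key weighted resolvent bound only for $(z-L_0)^{-1}$, but the remainder $\cE_{k,p}(z)$ in your Neumann expansion necessarily contains a factor $(z-H_p^{(x_0)})^{-1}$ of the \emph{full} rescaled operator, so you need the weighted estimate uniformly in $p$ for that resolvent as well. This is exactly the content of the paper's Theorem~\ref{Thm1.9}, proved by uniform ellipticity of the family $\{\mathcal H_h\}_{h\in[0,1]}$ and an induction on the weight exponent $M$ using the commutator identity $[Z^\alpha,\mathcal H_h]$; the bounded-geometry hypothesis is what makes the constants uniform in $x_0$. With that upgrade your outline is complete.
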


Putting $Z=Z^\prime=0$ in \eqref{e:main-exp}, we get the asymptotic expansion of Theorem \ref{c:main} with the coefficients  $f_{r}(x_0)$ given by
\[
f_{r}(x_0)=F_{r,x_0}(0,0), \quad r=0,1,\ldots.
\]

The proof of Theorem~\ref{t:main} follows the scheme of the proof of Theorem 1.19 in \cite{ma-ma08} on the near off-diagonal expansion of Bergman kernels modified as in \cite{Bochner-trace} to deal with general functions of the operator.  The main difference from \cite{ma-ma08,Bochner-trace} is that we use a different rescaling that results in different model operators, asymptotic expansions and estimates.  

So the first step (Section~\ref{local}) is a choice of appropriate local coordinates and trivialization of the line bundle and a localization of the problem near each point of the manifold that results in a family of second order differential operators in the fibers of the tangent bundle (model operators). Then, in Section~\ref{scale}, we rescale each operator of this family and derive its formal asymptotic expansion as $p\to \infty$. Finally, in Sections~\ref{norm} and \ref{proof}, we use the Dynkin-Helffer-Sj\"{o}strand formula, the formal power series technique, Sobolev norm estimates and Sobolev embedding theorem to derive a complete asymptotic expansions of Theorem~\ref{t:main} and compute their coefficients.
As an illustration, in Section \ref{s:ex-computation}, we consider the semiclassical magnetic Schr\"odinger operator in the Euclidean space (see Example~\ref{gauge}) and derive explicit formulas for the coefficients $f_1$ and $f_2$ stated in \cite[Theorem 1]{CdV12}.


 \section{Localization of the problem}\label{local}
In this section, we localize the problem near each point of the manifold, slightly modifying the constructions of \cite[Sections 1.1 and 1.2]{ma-ma08}. 

First, we will use normal coordinates near an arbitrary point $x_0\in X$ and trivializations of vector bundles $L$ and $E$ defined in Introduction. 
Consider the trivial bundles $L_0$ and $E_0$  on $T_{x_0}X$ with fibers $L_{x_0}$ and $E_{x_0}$, respectively. The above identifications induce the Riemannian metric $g$ on $B^{T_{x_0}X}(0,\varepsilon)$
as the connections $\nabla^L$ and $\nabla^E$ and the Hermitian metrics $h^L$ and $h^E$ on the restrictions of $L_0$ and $E_0$ to $B^{T_{x_0}X}(0,\varepsilon)$.

Now we fix $\varepsilon \in (0,r_X)$ and we extend these geometric objects from $B^{T_{x_0}X}(0,\varepsilon)$ to $T_{x_0}X$ so that $(T_{x_0}X, g^{(x_0)})$ is a manifold of bounded geometry, $L_0$ and $E_0$ have bounded geometry and $V^{{(x_0)}}\in C^\infty_b(T_{x_0}X,\operatorname{End}(E_0))$ be a self-adjoint endomorphism of $E_0$.  For instance, we can take a smooth even function $\rho : \mathbb R\to [0,1]$ supported in $(-r_X,r_X)$ such that $\rho(v)=1$ if $|v|<\varepsilon$ and introduce the map $\varphi : T_{x_0}X\to T_{x_0}X$ defined by $\varphi (Z)=\rho(|Z|_{g^{(x_0)}})Z$. Then we introduce the Riemannian metric $g^{(x_0)}_Z=g_{\varphi(Z)}, Z\in T_{x_0}X$, a Hermitian connection $\nabla^{L_0}$ on $(L_0,h^{L_0})$ by 
\[
\nabla^{L_0}_u=\nabla^L_{d\varphi(Z)(u)}, \quad Z\in T_{x_0}X, \quad u\in T_Z(T_{x_0}X), 
\]
where we use the canonical isomorphism $T_{x_0}X\cong T_Z(T_{x_0}X)$, and a Hermitian connection $\nabla^{E_0}$ on $(E_0,h^{E_0})$ by $\nabla^{E_0}=\varphi^*\nabla^E$. Finally, we set $V^{(x_0)}=\varphi^*V$. 

Let $\Delta^{L_0^p\otimes E_0}$ be the associated Bochner Laplacian acting on $C^\infty(T_{x_0}X,L_0^p\otimes E_0)$.  Introduce the operator $H^{(x_0)}_p$ acting on $C^\infty(T_{x_0}X,L_0^p\otimes E_0)$ by
\begin{equation}\label{e:defHx0p}
H^{(x_0)}_p=\frac{1}{p^2} \Delta^{L_0^p\otimes E_0}+ V^{(x_0)}.
\end{equation}
It is clear that, for any $u \in {C}^\infty_c(B^{T_{x_0}X}(0,\varepsilon))$, we have
\begin{equation}\label{e:Hp=HX0p}
H_pu(Z)=H^{(x_0)}_pu(Z).
\end{equation}

Let $dv^{(x_0)}$ be the Riemannian volume form of $(T_{x_0}X, g^{(x_0)})$ and
\begin{equation}\label{e:4.6}
	K_{\varphi(H^{(x_0)}_p)}\in {C}^{\infty}(T_{x_0}X\times T_{x_0}X,
\pi_1^*(L_0^p\otimes E_0)\otimes \pi_2^*(L^p_0\otimes E_0)^*)
\end{equation}
the Schwartz kernel of the operator $\varphi(H^{(x_0)}_p)$ with respect to the volume form $dv^{(x_0)}$. Recall (see \eqref{e:Kphix0}) that the Schwartz kernel $K_{\varphi(H_p)}$ induces a smooth section $K_{\varphi(H_p),x_0}(Z,Z^\prime)$ of the vector bundle $\pi^*(\operatorname{End}(E))$ on $TX\times_X TX$ defined for all $x_0\in X$ and $Z,Z^\prime\in T_{x_0}X$ with $|Z|, |Z^\prime|<r_X$. 

Using \eqref{e:Hp=HX0p} and the finite propagation speed property of solutions of hyperbolic equations, one can show that the kernels $K_{\varphi(H_p),x_0}(Z,Z^\prime)$ and $K_{\varphi(H^{(x_0)}_p)}(Z,Z^\prime)$ are asymptotically close on $B^{T_{x_0}X}(0,\varepsilon)$ in the $C^\infty$-topology, as $p\to \infty$ (see \cite[Proposition 1.3]{ma-ma08} or \cite[Proposition 2.1]{Bochner-trace}).

\begin{prop}\label{p:Pq-difference}
For any $\varepsilon_1\in (0,\varepsilon)$ and $N\in \NN$, there exists $C>0$ such that 
 \[
|K_{\varphi(H_p),x_0}(Z,Z^\prime)-K_{\varphi(H^{(x_0)}_p)}(Z,Z^\prime)|\leq Cp^{-N}
\]
for any $p\in\NN$, $x_0\in X$  and $Z,Z^\prime\in B^{T_{x_0}X}(0,\varepsilon_1)$.
 \end{prop}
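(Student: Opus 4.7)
The proof rests on finite propagation speed for the wave equation associated to $H_p$: since the principal symbol of $H_p$ is $\frac{1}{p^2}|\xi|_g^2$, the wave operator $\partial_t^2 + H_p$ propagates at the semiclassical speed $1/p$. The plan is as follows. Using boundedness of $V$ (a consequence of the bounded geometry of $E$), choose $C_0>0$ with $H_p + C_0 \geq 1$ uniformly in $p$ and $x_0$, and introduce the even function $\psi(\mu) := \varphi(\mu^2 - C_0) \in C_c^\infty(\mathbb{R})$, so that $\varphi(H_p) = \psi(\sqrt{H_p + C_0})$ by the functional calculus, and analogously for $H^{(x_0)}_p$. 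Fourier inversion then gives
$$\varphi(H_p) = \frac{1}{2\pi}\int_{\mathbb{R}}\widehat{\psi}(t)\cos(t\sqrt{H_p + C_0})\,dt,$$
together with the identical formula for $\varphi(H^{(x_0)}_p)$.

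Next, choose $\chi\in C_c^\infty(\mathbb{R})$ equal to $1$ near $0$ with $\supp \chi \subset [-1, 1]$, set $T := p(\varepsilon - \varepsilon_1)$, and split $\psi = \psi_{1,p} + \psi_{2,p}$ via $\widehat{\psi_{1,p}}(t) := \chi(t/T)\widehat{\psi}(t)$. For the near part, the distribution kernel of $\cos(t\sqrt{H_p+C_0})$ is supported in $\{d^{g^{(x_0)}}(Z,Z') \leq |t|/p\}$ by finite propagation speed, so for $|t| \leq T$ and $Z, Z' \in B^{T_{x_0}X}(0, \varepsilon_1)$ the relevant domain of dependence lies inside $B^{T_{x_0}X}(0, \varepsilon)$, where $H_p$ and $H^{(x_0)}_p$ coincide by construction. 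Uniqueness of the Cauchy problem for the wave equation therefore yields
$$K_{\psi_{1,p}(\sqrt{H_p + C_0})}(Z, Z') = K_{\psi_{1,p}(\sqrt{H^{(x_0)}_p + C_0})}(Z, Z')$$
on $B^{T_{x_0}X}(0,\varepsilon_1) \times B^{T_{x_0}X}(0,\varepsilon_1)$. For the far part, Schwartz decay of $\widehat{\psi}$ together with $T = p(\varepsilon - \varepsilon_1)$ gives $\sup_{\mu\in\mathbb{R}}(1+\mu^2)^k |\psi_{2,p}(\mu)| = O(p^{-N})$ for every $k, N\in \NN$, so
$$\|(1 + H_p + C_0)^k\,\psi_{2,p}(\sqrt{H_p+C_0})\,(1 + H_p + C_0)^k\|_{L^2\to L^2} = O(p^{-N}).$$
Combined with uniform elliptic Sobolev estimates for $H_p$ (available on balls of fixed radius in bounded geometry) and Sobolev embedding, this yields the pointwise bound $|K_{\psi_{2,p}(\sqrt{H_p+C_0})}(Z,Z')| = O(p^{-N})$ uniformly in $x_0$; the same bound holds for $H^{(x_0)}_p$.

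The main technical obstacle is the last step: converting $L^2$-operator-norm smallness into a pointwise kernel bound that is uniform in the semiclassical parameter. Because the top-order coefficient of $H_p$ carries a factor $1/p^2$, the map $(1 + H_p + C_0)^{-k} : L^2 \to H^{2k}$ only has norm $O(p^{2k})$, so the operator-norm bound must absorb an additional factor $p^{4k}$ when passed through Sobolev embedding; since the $O(p^{-N})$ estimate holds for arbitrarily large $N$, this is harmless, but one must track the $p$-dependence carefully. Uniformity in $x_0$ is ensured by the bounded-geometry hypotheses on $(X, g)$, $L$, and $E$, which guarantee that all elliptic and Sobolev embedding constants are independent of the base point.
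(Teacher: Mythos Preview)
Your proposal is correct and follows precisely the finite propagation speed argument that the paper invokes (the paper itself gives no details, only citing \cite[Proposition~1.3]{ma-ma08} and \cite[Proposition~2.1]{Bochner-trace}, which carry out exactly this strategy). One small remark: boundedness of $V$ is not a consequence of the bounded geometry of $E$ but is a separate standing hypothesis implicit in the paper's setup; otherwise your argument and the tracking of the $p$-dependence in the Sobolev step are on point.
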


Proposition~\ref{p:Pq-difference} reduces our considerations to the $C^\infty_b$-bounded family $H^{(x_0)}_{p}$ of second order elliptic differential operators acting on $C^\infty(T_{x_0}X, L_0^p\otimes E_0)\cong C^\infty(T_{x_0}X,E_{x_0})$ (parametrized by $x_0\in X$). Note that, once we moved to the Euclidean space, we can consider $p$ to be a real-valued parameter.

In order to write the operator $H^{(x_0)}_{p}$ more explicitly, we choose an orthonormal basis $\mathbf e=\{e_j, j=1,2,\ldots,d\}$ of $T_{x_0}X$. It gives rise to an isomorphism $T_{x_0}X\cong \mathbb R^{d}$. We will consider each $e_j$ as a constant vector field on $T_{x_0}X$ and use the same notation $e_j$ for the corresponding vector field on $\mathbb R^d$: $e_j=\frac{\partial}{\partial Z_j}$.   

The operator $H^{(x_0)}_{p}$ is given by the formula
\begin{equation}\label{e:Hx0p}  
H^{(x_0)}_{p}=-\frac{1}{p^2}\sum_{j,k=1}^{d} g^{jk}(Z)\left[\nabla^{L_0^p\otimes E_0}_{e_j}\nabla^{L_0^p\otimes E_0}_{e_k}- \sum_{\ell=1}^{d}\Gamma^{\ell}_{jk}(Z)\nabla^{L_0^p\otimes E_0}_{e_\ell}\right]+V(Z),
\end{equation}
where $(g^{jk})$ is the inverse matrix of the Riemannian metric $g^{(x_0)}$ on $T_{x_0}X$ and the functions $\Gamma^{\ell}_{jk}$ are defined by the equality 
$\nabla^{TX}_{e_j}e_k=\sum_{\ell=1}^d \Gamma^{\ell}_{jk}e_\ell$ with $\nabla^{TX}$ the Levi-Civita connection of $(T_{x_0}X, g^{(x_0)})$. 

Let $\Gamma^{L_0}=\sum_{j=1}^{d}\Gamma_j^{L_0}(Z)dZ_j$, $\Gamma_j^{L_0}\in C^\infty(\mathbb R^{d})$, and $\Gamma^{E_0}=\sum_{j=1}^{d}\Gamma_j^{E_0}(Z)dZ_j$, $\Gamma_j^{E_0}\in C^\infty(\mathbb R^{d}, \operatorname{End}(E_{x_0}))$, be the connection forms of $\nabla^{L_0}$ and $\nabla^{E_0}$, respectively. Then 
\begin{equation}\label{e:nablaL0E0}  
\nabla^{L_0^p\otimes E_0}_{e_j}=\nabla_{e_j}+p\Gamma^{L_0}_j(Z)+\Gamma^{E_0}_j(Z),
\end{equation}
where $\nabla_{v}$ denotes the usual derivative of a vector-valued function along $v$.

\begin{ex}\label{gauge}
As an example, consider the case when $X$ is the Euclidean space $\mathbb R^d$ with the canonical Euclidean metric, the Hermitian line bundle $(L,h^L)$ is trivial, $(E,h^E)$ is a trivial Hermitian line bundle with a trivial connection $\nabla^E$ and the magnetic potential $\bf A$ written as
\[
{\bf A}= \sum_{j=1}^{d}A_j(x)\,dx_j, \quad x=(x_1,\ldots, x_d)\in \mathbb R^d.
\]
The operator is given by 
\[
H_h=\sum_{j=1}^{d}\left(\frac{h}{i}\frac{\partial}{\partial x_j}-A_j(x)\right)^2 +V(x), \quad h>0.
\]

Fix $x_0\in \mathbb R^d$. First, we introduce new coordinates $Z\in \mathbb R^d$ by the formula $x=x_0+Z$. Actually, these are normal coordinates near $x_0$: $x=\exp_{x_0}(Z)$, and it is convenient to think of $Z$ as a tangent vector to $\mathbb R^d$ at $x_0$. In these coordinates, the operator is written as 
\[
H_h=\sum_{j=1}^{d}\left(\frac{h}{i}\frac{\partial}{\partial Z_j}-A_j(x_0+Z)\right)^2 +V(x_0+Z), \quad h>0.
\]

Next, we consider the trivialization of the trivial line bundle $L$ over $\mathbb R^d$ by parallel transport along the rays $t\in \mathbb R_+\mapsto x_0+tZ\in \mathbb R^d$. In other words, we make a gauge transformation  such that the new magnetic potential 
$$\mathbf A^{(x_0)}=\mathbf A+d\Phi^{(x_0)}=\sum_{j=1}^dA^{(x_0)}_j(Z)dZ_j,$$ 
satisfies the conditions  
\[
\mathbf A^{(x_0)}(0)=0, \quad 
\iota_{\mathcal R}\mathbf A^{(x_0)}(Z)=\sum_{j=1}^dZ_jA^{(x_0)}_j(Z)=0,
\]
where $\mathcal R=\sum_{j=1}^dZ_j\frac{\partial}{\partial Z_j}$ is the radial vector field in $\mathbb R^d$.

This gauge is sometimes called the Fock-Schwinger, Poincar\'e or transverse gauge. In local index theory, it was introduced in \cite{ABP73} and called the synchronous framing. 
It plays a crucial role in the gauge covariant magnetic perturbation theory elaborated by H. Cornean and G. Nenciu (see, for instance, \cite{CN98,CN00,N02}).

In order to find such a gauge transformation, we have to solve the differential equation 
\[
\mathcal R[\Phi^{(x_0)}](Z)=-\iota_{\mathcal R}\mathbf A(x_0+Z),
\]
which gives
\[
\Phi^{(x_0)}(Z)=-\sum_{j=1}^d\int_0^1 A_j(x_0+\tau Z)Z_j\,d\tau.
\]
The potential $\mathbf A^{(x_0)}$ is given by
\[
A^{(x_0)}_j(Z)=\sum_{k=1}^d\left(\int_0^1 B_{kj}(x_0+\tau Z)\tau\,d\tau\right) Z_k, \quad j=1,\ldots,d.
\]

Expanding both parts of the last formula in Taylor series in $Z$, we get the following well-known fact proved in \cite[Appendix II]{ABP73} (see also \cite[Lemma 1.2.4]{ma-ma:book}): for any $r\geq 0$ and $j=1,\ldots,d$, 
\begin{equation}\label{e:ABP73}
\sum_{|\alpha|=r}\partial^\alpha A^{(x_0)}_j(0)\frac{Z^\alpha}{\alpha!}=\frac{1}{r+1}\sum_{|\alpha|=r-1}\sum_{k=1}^d\partial^\alpha B_{kj}(x_0)\,Z_k\frac{Z^\alpha}{\alpha!}.
\end{equation}

Finally, we have the corresponding unitary transformation of operators:
\begin{equation}\label{e:H-equiv}
 H_h=e^{i\Phi^{(x_0)}/h} H^{(x_0)}_\hbar e^{-i\Phi^{(x_0)}/h},
\end{equation}
where
\begin{equation}\label{e:Hx0h-flat}
H^{(x_0)}_h=\sum_{j=1}^{d}\left(\frac{h}{i}\frac{\partial}{\partial Z_j}-A^{(x_0)}_j(Z)\right)^2 +V(x_0+Z), \quad h>0.
\end{equation}

In the current setting, the operators are defined globally, in the entire tangent space $T_{x_0}X\cong \mathbb R^d$, and we don't need the above extension procedure. 
\end{ex}

 \section{Rescaling and formal expansions}\label{scale}
Fix $x_0\in X$. We introduce the rescaling of the operator $H_p^{(x_0)}$ defined by \eqref{e:defHx0p}.  Denote $h=\frac{1}{p}$. For $s\in C^\infty(T_{x_0}X, E_{x_0})$, set
\[
S_hs(Z)=s(Z/h), \quad Z\in T_{x_0}X.
\]
Define the operator $\mathcal H_h$ acting on $s\in C^\infty_c(T_{x_0}X, E_{x_0})$ by
\begin{equation}\label{scaling}
\mathcal H_h=S^{-1}_h\kappa^{\frac 12}H_p^{(x_0)}\kappa^{-\frac 12}S_h,
\end{equation}
where $\kappa=\kappa_{x_0}$ is defined in \eqref{e:def-kappa}. It depends on $x_0$, but, for simplicity of notation, we will omit its dependence on $x_0$. By construction, this is a self-adjoint operator in $L^2(T_{x_0}X, E_{x_0})$, and its spectrum coincides with the spectrum of $H_p^{(x_0)}$. 

If we fix an orthonormal basis $\mathbf e=\{e_j, j=1,2,\ldots,d\}$ of $T_{x_0}X$ for each $x_0$, then we get
a family of second order differential operators on $C^\infty(\RR^d, E_{x_0})$, which depend on the choice of $\mathbf e$ and will be also denoted by $\mathcal H_h$.  For simplicity of notation, we will omit its dependence on $\mathbf e$. By \eqref{e:Hx0p}, the operator $\mathcal H_h$ is given by the formula
\begin{equation}\label{e:Ht}  
\mathcal H_h=-\sum_{j,k=1}^{d} g^{jk}(hZ)\left[\nabla_{h,e_j}\nabla_{h,e_k}-h\sum_{\ell=1}^{d}\Gamma^{\ell}_{jk}(hZ)\nabla_{h,e_\ell}\right]+V(hZ),
\end{equation}
where the rescaled connection $\nabla_h$ is given by
\begin{equation}\label{e:nablat}
\nabla_h=hS^{-1}_h\kappa^{\frac 12}\nabla^{L_0^p\otimes E_0}\kappa^{-\frac 12}S_h.
\end{equation}
By \eqref{e:nablaL0E0}, we have  
\begin{equation}\label{E:nablatej}
\begin{split}
\nabla_{h,e_j}&=h\kappa^{\frac 12}(hZ)\left(\frac{1}{h}\nabla_{e_j}+\frac{1}{h}\Gamma^{L_0}_j(hZ)+\Gamma^{E_0}_j(hZ)\right)\kappa^{-\frac 12}(hZ)\\
&=\nabla_{e_j}+\Gamma^{L_0}_j(hZ)+h\Gamma^{E_0}_j(hZ)-h\left(\kappa^{-1}\frac{\partial \kappa}{\partial Z_j}\right)(hZ).
\end{split}
\end{equation}

The operators $\nabla_{h,e_i}$ depend smoothly on $h$ up to $h=0$, and, since $\Gamma^{L_0}_j(0)=0$,   the limit $\nabla_{0,e_j}$ of $\nabla_{h,e_j}$ as $h\to 0$ coincides with the flat connection $\nabla$:
\begin{equation}\label{E:nabla0ej}
\nabla_{0,e_j}=\nabla_{e_j}=\frac{\partial}{\partial Z_j}. 
\end{equation}
 
Now we expand the coefficients of the operator $\mathcal H_h$ in Taylor series in $h$. For any $m\in \NN$, we get
\begin{equation}\label{e:Ht-formal}
\mathcal H_h=\mathcal H^{(0)}+\sum_{r=1}^m \mathcal H^{(r)}h^r+\mathcal O(h^{m+1}), 
\end{equation}
where there exists $m^\prime\in \NN$ so that for every $k\in\NN$ and $h\in [0,1]$ the derivatives up to order $k$ of the coefficients of the operator $\mathcal O(h^{m+1})$ are bounded by $Ch^{m+1}(1+|Z|)^{m^\prime}$. 

The leading term $\mathcal H^{(0)}$ in \eqref{e:Ht-formal} is given by
\begin{equation}\label{e:H0}  
\mathcal H^{(0)}=-\sum_{j=1}^{d} (\nabla_{e_j})^2+V(x_0).
\end{equation}
 
The next terms $\mathcal H^{(r)}, r\geq 1,$ have the form
\begin{equation}\label{e:Hj}
\mathcal H^{(r)}=\sum_{k,\ell=1}^{d} a_{k\ell,r}\nabla_{e_k}\nabla_{e_\ell}+\sum_{k=1}^{d} b_{k,r}\nabla_{e_k}+c_{r},
\end{equation}
where $a_{k\ell,r}$ is a homogeneous polynomial in $Z$ of degree $r$,  $b_{k,r}$ and $c_{r}$ are polynomials in $Z$ of degree $\leq r$. It is easy to see that the coefficients of $\mathcal H^{(r)}$ are universal polynomials depending on a finite number of derivatives of $V$ and the curvatures $R^{TX}$, $R^L$ and $R^E$ in $x_0$.
 
%
%

\begin{ex}\label{gauge2}
In the setting of Example~\ref{gauge}, by \eqref{e:Hx0h-flat}, we have 
\[
{\mathcal H}^{(x_0)}_h=-\sum_{j=1}^{d}\left(\nabla_{e_j}-iA^{(x_0)}_j(hZ)\right)^2 +V(x_0+hZ), \quad h>0,
\]
where $\nabla_{e_j}=\frac{\partial}{\partial Z_j}$. 

By \eqref{e:ABP73}, we have an asymptotic expansion
\[
A^{(x_0)}_j(hZ)\sim \sum_{r=1}^\infty h^{r}A_{j,r,x_0}(Z), \quad Z\in \mathbb R^d, \quad j=1,\ldots,d, 
\]
where
\[
A_{j,r,x_0}(Z)=\frac{1}{r+1}\sum_{|\alpha|=r-1}\sum_{k=1}^d(\partial^\alpha B_{jk})_{x_0} \,Z_k\frac{Z^\alpha}{\alpha!}
\]
is homogeneous of degree $r$. In particular, we have 
\[
A_{j,1,x_0}(Z)=\frac{1}{2}\sum_{k=1}^dB_{k j}(x_0)\,Z_k, \quad
A_{j,2,x_0}(Z)=\frac{1}{3}\sum_{k,\ell=1}^d\frac{\partial B_{jk}}{\partial x_\ell}(x_0) \,Z_k Z_\ell.
\]

Expanding the coefficients of the operator ${\mathcal H}^{(x_0)}_h$ in Taylor series in $h$, for any $m\in \NN$, we get the expansion \eqref{e:Ht-formal}. The leading term $\mathcal H^{(0)}$ is given by \eqref{e:H0}. The terms $\mathcal H^{(r)}, r\geq 1,$ are first order differential operators of the form
\begin{equation}\label{e:Hj-flat}
\mathcal H^{(r)}=\sum_{j=1}^{d}b_{jr}\frac{\partial}{\partial Z_j}+c_r,
\end{equation}
where $$b_{jr}=2iA_{j,r,x_0}$$ is a polynomial in $Z$ of degree $r$ and $$c_r=\sum_{j=1}^{d}\left(\frac{\partial A_{j,r,x_0}}{\partial x_j} +\sum_{l=1}^{r} A_{j,l,x_0} A_{j,r-l,x_0}\right)+\sum_{|\alpha|=r}\partial^\alpha V(x_0)\frac{Z^\alpha}{\alpha!}$$ is a polynomial in $Z$ of degree $r$.  

In particular, we have 
\begin{equation}\label{e:H1}
\mathcal H^{(1)}=i\sum_{j,k=1}^{d}B_{k j}(x_0)\,Z_k\nabla_{e_j}+\sum_{j=1}^{d}\frac{\partial V}{\partial x_j}(x_0)Z_j,
\end{equation}
and
\begin{equation}\label{e:H2}
\begin{aligned}
\mathcal H^{(2)}=& \frac{2}{3}i\sum_{j,k,\ell=1}^d\frac{\partial B_{jk}}{\partial x_\ell}(x_0) \,Z_k Z_\ell \nabla_{e_j}\\ & +\sum_{j=1}^{d}\left(\frac{1}{3}\sum_{k=1}^d\frac{\partial B_{jk}}{\partial x_j}(x_0)  \,Z_k  +\frac{1}{4}\left(\sum_{k=1}^dB_{k j}(x_0)\,Z_k\right)^2 \right)\\ &+\frac 12 \sum_{j,k=1}^d \frac{\partial^2V}{\partial x_j\partial x_k}(x_0)Z_jZ_k.
\end{aligned}
\end{equation}
\end{ex}

\section{Norm estimates}\label{norm}
In this section, we will establish norm estimates for the operators $\varphi(\mathcal H_{h})$ and its derivatives of any order with respect to $h$.  

Let $H^m=H^m(\mathbb R^{d},E_{x_0})$ be the standard Sobolev space of order $m\in \mathbb Z$ of vector-valued functions on $\mathbb R^{d}$ equipped with the standard Sobolev norm 
\[
\|s\|^2_{m}=\int_{\mathbb R^{d}}(1+|\xi|^2)^{m/2}|\hat s(\xi)|_{h^{E^*_{x_0}}}^2 d\xi,\quad s\in H^m, 
\]
where $\hat s(\xi)$ is the Fourier transform of $s$. In particular, $H^0=L^2(\mathbb R^{d},E_{x_0})$ and 
\[
\|s\|^2_{0}=\int_{\mathbb R^{d}}|s(Z)|_{h^{E_{x_0}}}^2dZ, \quad s\in H^0.
\] 


For $\alpha\in \ZZ^d_+$, we will use the standard notation
\[
Z^\alpha=Z_1^{\alpha_1}Z_2^{\alpha_2}\ldots Z_d^{\alpha_d}, \quad Z\in \RR^d. 
\]
For any $m\in \mathbb Z$ and $M\in \ZZ_+$, we set
\[
\|s\|_{m,M}:= \sum_{|\alpha|\leq M}\left\|Z^{\alpha} s\right\|_{m}, \quad s\in C^\infty_c(\mathbb R^{d},E_{x_0}).
\]



\begin{thm}\label{Thm1.9}
For any $h\in [0,1]$, $m\in \mathbb Z$, $M\in \ZZ_+$, $\lambda=\mu+i\nu\in \CC\setminus \RR$ and $x_0\in X$, we have 
\begin{equation}\label{e:mm+2t}
\left\|(\lambda-\mathcal H_h)^{-1}s\right\|_{m+2,M}\leq C|\nu|^{-M-1}\left\|s\right\|_{m,M},\quad s\in C^\infty_c(\mathbb R^{d},E_{x_0}),  
\end{equation}
where $C=C_{m,M}>0$ is independent of $h\in [0,1]$, $\lambda\in  \CC\setminus \RR$ and $x_0\in X$.
\end{thm}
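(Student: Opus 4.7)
The plan is to prove \eqref{e:mm+2t} by induction on $M$, combining a standard self-adjoint resolvent/elliptic estimate in the base case with a commutator identity that transfers the polynomial weights $Z^\alpha$ from outside to inside the resolvent in the inductive step.

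\textbf{Preparation and base case ($M = 0$).} First I would observe that, thanks to \eqref{e:Ht}, \eqref{E:nablatej} and the bounded-geometry extension of Section~\ref{local}, the family $\{\mathcal H_h\}_{h \in [0,1]}$ consists of uniformly elliptic second-order differential operators on $\mathbb R^d$ whose coefficients together with all their $Z$-derivatives are bounded uniformly in $h \in [0,1]$ and $x_0 \in X$. As a unitary conjugate of $H_p^{(x_0)}$ via \eqref{scaling}, $\mathcal H_h$ is essentially self-adjoint on $L^2(\mathbb R^d, E_{x_0})$, so the spectral theorem gives $\|(\lambda - \mathcal H_h)^{-1}\|_{L^2 \to L^2} \leq |\nu|^{-1}$. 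Combining this with the standard elliptic regularity estimate $\|u\|_{m+2} \leq C_m(\|\mathcal H_h u\|_m + \|u\|_m)$, uniform in $h$ and $x_0$, and using $\mathcal H_h u = \lambda u - s$ with $u = (\lambda - \mathcal H_h)^{-1} s$ yields the base case for every $m \in \mathbb Z$.

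\textbf{Inductive step.} Setting $T_h(\lambda) := (\lambda - \mathcal H_h)^{-1}$, I would then assume \eqref{e:mm+2t} at level $M - 1$ for every $m \in \mathbb Z$, and for each multi-index $\alpha$ with $|\alpha| \leq M$ write
\[
Z^\alpha T_h(\lambda) s = T_h(\lambda)(Z^\alpha s) + T_h(\lambda)\, [Z^\alpha, \mathcal H_h]\, T_h(\lambda) s.
\]
A direct inspection of \eqref{e:Ht} via Leibniz's rule shows that $[Z^\alpha, \mathcal H_h]$ is a first-order differential operator whose coefficients are polynomials in $Z$ of degree at most $|\alpha| - 1$ multiplied by functions bounded together with all their derivatives uniformly in $h \in [0,1]$ and $x_0 \in X$; consequently
\[
\|[Z^\alpha, \mathcal H_h]\, v\|_m \leq C \|v\|_{m+1,\, |\alpha| - 1}.
\]
Applying this with $v = T_h(\lambda) s$ and the induction hypothesis with $(m - 1, M - 1)$ in place of $(m, M)$ gives $\|[Z^\alpha, \mathcal H_h] T_h(\lambda) s\|_m \leq C |\nu|^{-M} \|s\|_{m, M}$; feeding this into the outer resolvent via the base case then yields the desired bound $C |\nu|^{-M - 1} \|s\|_{m, M}$ for the commutator term. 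The first summand is handled directly by the base case, and summing over $|\alpha| \leq M$ closes the induction.

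\textbf{Main obstacle.} The main technical point will be to make all constants --- in the elliptic regularity estimate and in the commutator bounds --- uniform in $h \in [0,1]$ and $x_0 \in X$; this uniformity relies on the bounded-geometry hypothesis and the extension performed in Section~\ref{local}, which guarantee uniform $C^\infty_b$-bounds on the coefficients of $\mathcal H_h$. The algebraic observation that the first-order part of $[Z^\alpha, \mathcal H_h]$ has polynomial coefficients of degree exactly $|\alpha| - 1$ (rather than $|\alpha|$) is what allows the induction to close with the sharp rate $|\nu|^{-M - 1}$.
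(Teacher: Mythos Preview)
Your proposal is correct and follows essentially the same approach as the paper's proof: both establish the base case $M=0$ via self-adjointness plus uniform elliptic regularity (and then duality/interpolation for general $m\in\mathbb Z$), and both run the induction on $M$ using the identity $Z^\alpha(\lambda-\mathcal H_h)^{-1}=(\lambda-\mathcal H_h)^{-1}Z^\alpha+(\lambda-\mathcal H_h)^{-1}[Z^\alpha,\mathcal H_h](\lambda-\mathcal H_h)^{-1}$ together with the observation that $[Z^\alpha,\mathcal H_h]$ is first order with polynomial coefficients of degree $\leq |\alpha|-1$. The only cosmetic difference is that the paper obtains the base case by passing through the equivalent norms $\|s\|_{2m}\asymp \|\mathcal H_h^m s\|_0+\|s\|_0$ and the spectral bound on $\mathcal H_h^{m+1}(\lambda-\mathcal H_h)^{-1}$, whereas you bootstrap from the single elliptic estimate $\|u\|_{m+2}\leq C(\|\mathcal H_h u\|_m+\|u\|_m)$; these are interchangeable.
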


\begin{proof}
For any $h\in [0,1]$, the operator $\mathcal H_h$ is a second order uniformly elliptic differential operator with coefficients in $C^\infty_b(\mathbb R^d, \operatorname{End}(E_{x_0}))$. Moreover, the operator family $\{\mathcal H_h: h\in [0,1], x_0\in X \}$ is uniformly elliptic and uniformly bounded in $C^\infty_b(\mathbb R^d, \operatorname{End}(E_{x_0}))$. By standard elliptic theory, we infer that this family is a uniformly bounded family of bounded operators from $H^{2m}$ to $L^2$:
\[
\|\mathcal H_h^{m}s\|_0\leq C_m\|s\|_{2m},\quad s\in C^\infty_b(\mathbb R^d, \operatorname{End}(E_{x_0})), h\in [0,1], x_0\in X,
\]
and the standard elliptic estimate holds:
\[
c_m\|s\|_{2m}\leq \|\mathcal H_h^{m}s\|_0+a_m\|s\|_0,\quad s\in C^\infty_b(\mathbb R^d, \operatorname{End}(E_{x_0})), h\in [0,1], x_0\in X. 
\]
Using these estimates and spectral theorem, we proceed as follows:
\begin{multline*}
\|(\lambda-\mathcal H_h)^{-1}s\|_{2m+2}\\
\begin{aligned}
& \leq \frac{1}{c_{m+1}}(\|\mathcal H_h^{m+1}(\lambda-\mathcal H_h)^{-1} s\|_0+a_{m+1}\|(\lambda-\mathcal H_h)^{-1} s\|_0)\\ & \leq \frac{1}{c_{m+1}}(\|\mathcal H_h(\lambda-\mathcal H_h)^{-1} s\|_0\|\mathcal H_h^{m}s\|_0+a_{m+1}\|(\lambda-\mathcal H_h)^{-1} s\|_0)\\ & \leq \frac{C_m}{c_{m+1}}\sup_{t\in \mathbb R}\left|t(\lambda-t)^{-1}\right|\|s\|_{2m}+ \frac{a_{m+1}}{c_{m+1}}|\nu|^{-1} \|s\|_0\leq C|\nu|^{-1}\|s\|_{2m},
\end{aligned}
\end{multline*}
that proves \eqref{e:mm+2t} for any $m\geq 0$. The case of arbitrary $m\in \mathbb Z$ follows by duality and interpolation.

Now we proceed by induction on $M$. We assume that \eqref{e:mm+2t} holds for all $m\in \ZZ$ and some $M\geq 0$. Take $\alpha\in \ZZ^d_+$ with $|\alpha|=M+1$. Then we write
\begin{multline}\label{e:est1}
Z^{\alpha} (\lambda-\mathcal H_{h})^{-1} s
= (\lambda-\mathcal H_{h})^{-1}Z^{\alpha}s+[Z^{\alpha},(\lambda-\mathcal H_{h})^{-1}]s\\
=(\lambda-\mathcal H_{h})^{-1}Z^{\alpha}s +(\lambda-\mathcal H_{h})^{-1}[Z^{\alpha}, \mathcal H_{h}](\lambda-\mathcal H_{h})^{-1}.
\end{multline}
By \eqref{e:Ht}, the commutator $[Z^{\alpha}, \mathcal H_{h}]$ is a first order differential operator given by
\begin{multline}\label{e:comm-Hh}
[Z^{\alpha},\mathcal H_h] =-\sum_{j,k=1}^{d} g^{jk}(hZ)\Big[[Z^{\alpha},\nabla_{h,e_j}]\nabla_{h,e_k}+\nabla_{h,e_j}[Z^{\alpha},\nabla_{h,e_k}]\\ - h\sum_{\ell=1}^{d}\Gamma^{\ell}_{jk}(hZ)[Z^{\alpha},\nabla_{h,e_\ell}]\Big].
\end{multline}

Using the commutation relation
\begin{equation}\label{e:Z-nabla-com}
[Z_\ell,\nabla_{h;e_j}]=-\delta_{j\ell},\quad  j,\ell=1,2,\ldots,d,
\end{equation}
we get for any $\alpha^\prime\in \ZZ^d_+$
\begin{equation}\label{e:Z-nabla1-com}
[Z^{\alpha^\prime},\nabla_{h;e_j}]=-\alpha^\prime_j Z^{\alpha^\prime-\delta_j},\quad  j=1,2,\ldots,d, 
\end{equation}
where $\delta_j\in \ZZ^d_+$ is given by $(\delta_j)_\ell=\delta_{\ell j}, \ell=1,2,\ldots,d$. 
Therefore, for any $h\in (0,1]$ and $m\in \mathbb Z$,  we have
\begin{equation}\label{e:est-comm}
\|[Z^\alpha,\nabla_{h;e_j}]s\|_{m+2}\leq C_1\|s\|_{m+2,M}
\end{equation}
and 
\begin{multline}\label{e:est-nablat}
\|\nabla_{h;e_j} s\|_{m,M}= \sum_{|\alpha^\prime|\leq M}\left\|Z^{\alpha^\prime} \nabla_{h;e_j} s\right\|_{m+1}\leq \sum_{|\alpha^\prime|\leq M}\left\|\nabla_{h;e_j} Z^{\alpha^\prime} s\right\|_{m}\\ + \sum_{|\alpha^\prime|\leq M}\left\|[Z^{\alpha^\prime}, \nabla_{h;e_j}] s\right\|_{m} \leq C_2\|s\|_{m+1,M}, \quad s\in C^\infty_c(\mathbb R^{d},E_{x_0}),
\end{multline}
where $C_1, C_2>0$ are independent of $h\in (0,1]$ and $x_0\in X$.  

By \eqref{e:comm-Hh} \eqref{e:est-comm} and \eqref{e:est-nablat}, we conclude that, for any $h\in (0,1]$ and $m\in \mathbb Z$,
\begin{equation}\label{e:est-comm-res}
\|[Z^{\alpha},\mathcal H_h]s\|_{m}\leq C \|s\|_{m+1,M}.
\end{equation}

Using the induction hypothesis and \eqref{e:est-comm-res}, we proceed in \eqref{e:est1} as follows: 
\begin{align*}
\left\|Z^{\alpha} (\lambda-\mathcal H_{h})^{-1} s\right\|_{m+2} 
\leq & C |\nu|^{-1}  \left(\left\|s\right\|_{m,M+1}+\left\|[Z^{\alpha}, \mathcal H_{h}] (\lambda-\mathcal H_{h})^{-1} s\right\|_{m}\right)\\
\leq & C |\nu|^{-1} \Big(\left\|s\right\|_{m,M+1}+\left\|(\lambda-\mathcal H_{h})^{-1} s\right\|_{m+1,M} \Big)\\
\leq & C |\nu|^{-M-2}\|s\|_{m,M+1},
\end{align*}
that completes the proof of \eqref{e:mm+2t}. 
\end{proof}

By the Dynkin-Helffer-Sj\"{o}strand formula, for $\varphi\in C^\infty_c(\RR)$, we have  
\begin{equation}\label{e:HS}
\varphi(\mathcal H_{h})=-\frac{1}{\pi }
\int_\CC \frac{\partial \tilde{\varphi}}{\partial \bar \lambda}(\lambda)(\lambda-\mathcal H_{h})^{-1}d\mu d\nu,
\end{equation}
where $\tilde{\varphi}\in C^\infty_c(\CC)$ is an almost-analytic extension of $\varphi$ satisfying 
\[
\frac{\partial \tilde{\varphi}}{\partial \bar \lambda}(\lambda)=O(|\nu|^\ell),\quad \lambda=\mu+i\nu, \quad \nu\to 0,
\]
for any $\ell\in \NN$.

Let us consider the function $\psi(\lambda)=\varphi(\lambda)(a-\lambda)^K$ with any $K\in \NN$ and $a<0$. Then its almost-analytic extension $\tilde{\psi}$ can be taken to be $\tilde{\psi}(\lambda)=\tilde{\varphi}(\lambda)(a-\lambda)^K$. If we apply the formula \eqref{e:HS} to $\psi$, then we get 
\begin{equation}\label{e:HS1}
\varphi(\mathcal H_{h})=-\frac{1}{\pi }
\int_\CC \frac{\partial \tilde{\varphi}}{\partial \bar \lambda}(\lambda) (a-\lambda)^K (\lambda-\mathcal H_{h})^{-1}(a-\mathcal H_{h})^{-K}d\mu d\nu.
\end{equation}

\begin{prop}
For any $h\in (0,1]$ and $m, m^\prime\in \ZZ$, the operator $\varphi(\mathcal H_{h})$ extends to a bounded operator from $H^m$  to $H^{m^\prime}$ with the following norm estimate for any $M\in \mathbb Z_+$:
\begin{equation}\label{e:varphi-Ht}
\left\|\varphi(\mathcal H_{h})s\right\|_{m^\prime,M}\leq C\left\|s\right\|_{m,M}, \quad s\in C^\infty_c(\mathbb R^{d},E_{x_0}),
\end{equation}
where $C=C_{M,m.m^\prime}>0$ is independent of $h\in (0,1]$ and $x_0\in X$.  
\end{prop}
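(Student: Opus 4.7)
The plan is to apply the formula \eqref{e:HS1} and estimate the integrand by iterating the resolvent bound of Theorem~\ref{Thm1.9}, with the auxiliary factor $(a-\mathcal H_h)^{-K}$ providing the mechanism for boosting the Sobolev order from $m$ to $m^\prime$.

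First I would fix a real number $a<\inf_X V$, so that $a$ lies strictly below the spectrum of $\mathcal H_h$ uniformly in $h\in (0,1]$ and $x_0\in X$; such $a$ exists because $\mathcal H_h$ is unitarily equivalent to $H_p^{(x_0)}\geq \inf_X V$. The key preliminary step is a version of Theorem~\ref{Thm1.9} at the real parameter $\lambda=a$, namely the uniform estimate
\begin{equation*}
\|(a-\mathcal H_h)^{-1} s\|_{m+2, M} \leq C_{m,M} \|s\|_{m,M}, \quad s\in C^\infty_c(\RR^d, E_{x_0}).
\end{equation*}
This follows by repeating the proof of Theorem~\ref{Thm1.9} verbatim, with the uniform operator-norm bound $\|(a-\mathcal H_h)^{-1}\|_{L^2\to L^2}\leq (\inf\sigma(\mathcal H_h)-a)^{-1}$ playing the role of the spectral bound $|\nu|^{-1}$, while the induction on $M$ via the commutator relation \eqref{e:Z-nabla1-com} is unchanged.

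Iterating $K$ times yields $\|(a-\mathcal H_h)^{-K} s\|_{m+2K, M}\leq C\|s\|_{m, M}$. Combining with \eqref{e:mm+2t} applied to the remaining factor $(\lambda-\mathcal H_h)^{-1}$ then gives
\begin{equation*}
\|(\lambda-\mathcal H_h)^{-1}(a-\mathcal H_h)^{-K} s\|_{m+2K+2, M}\leq C|\nu|^{-M-1}\|s\|_{m, M}.
\end{equation*}
I would choose $K$ so that $m+2K+2\geq m^\prime$; since $\|\cdot\|_{m^\prime}\leq \|\cdot\|_{m+2K+2}$, this controls the desired left-hand side. Inserting this bound into \eqref{e:HS1}, selecting an almost-analytic extension $\tilde\varphi$ with $|\partial_{\bar\lambda}\tilde\varphi(\lambda)|=O(|\nu|^{M+2})$, and using that $|a-\lambda|^K$ is uniformly bounded on the compact support of $\tilde\varphi$, the integrand is dominated by a bounded function of compact support in $\lambda$. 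Absolute convergence then produces \eqref{e:varphi-Ht} with a constant independent of $h\in(0,1]$ and $x_0\in X$.

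The only step that requires genuine attention is the extension of Theorem~\ref{Thm1.9} to the real point $\lambda=a$ below the spectrum; this should be essentially routine, since the sole use of $\nu\neq 0$ in the original proof is through the spectral bound on the resolvent norm, which is replaced here by the uniform distance of $a$ from the spectrum. The remainder of the argument is bookkeeping with the Helffer-Sj\"ostrand calculus.
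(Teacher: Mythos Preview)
Your argument is correct and follows the same approach as the paper: combine the Helffer--Sj\"ostrand representation \eqref{e:HS1} with the resolvent estimate of Theorem~\ref{Thm1.9}, using the auxiliary powers $(a-\mathcal H_h)^{-K}$ to gain the required Sobolev regularity. In fact you are slightly more explicit than the paper, which simply invokes Theorem~\ref{Thm1.9} for the combined operator $(\lambda-\mathcal H_h)^{-1}(a-\mathcal H_h)^{-K}$ without spelling out, as you do, that the estimate must first be extended to the real parameter $a$ below the spectrum.
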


\begin{proof}
 By Theorem \ref{Thm1.9}, for any $m^\prime, K\in \NN$ and $M\in \mathbb Z_+$, there exists $C>0$ such that, for all $h\in (0,1]$ and $\lambda\in\CC\setminus \RR$,
\begin{multline*}
\left\|(\lambda - \mathcal H_{h})^{-1} (a - \mathcal H_{h})^{-K}s\right\|_{m^\prime,M}\\ \leq C|\nu|^{-M-1}\left\|s\right\|_{m^\prime-2(K+1),M}, \quad s\in C^\infty_c(\mathbb R^{d},E_{x_0}). 
\end{multline*}
By the above estimates, the desired statement follows immediately from the formula \eqref{e:HS1} with  appropriate $K$.
\end{proof}

\begin{thm}\label{est-rem}
For any $r\geq 0$, $m, m^\prime\in \ZZ$, and $M\in \mathbb Z_+$, there exists $C>0$ such that, for any $h\in (0,1]$, 
\[
\left\|\frac{\partial^r}{\partial h^r}\varphi(\mathcal H_{h}) s\right\|_{m,M}\leq C\|s\|_{m^\prime,M+2r}, \quad s\in C^\infty_c(\mathbb R^{d},E_{x_0}).
\] 
\end{thm}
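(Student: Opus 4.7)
The plan is to differentiate the Helffer--Sj\"ostrand representation \eqref{e:HS1} in $h$ under the integral sign and estimate the resulting kernel by iterating the resolvent identity. Specifically, from $\partial_h(\lambda-\mathcal H_h)^{-1}=(\lambda-\mathcal H_h)^{-1}(\partial_h\mathcal H_h)(\lambda-\mathcal H_h)^{-1}$ combined with the Leibniz rule applied to the product $(\lambda-\mathcal H_h)^{-1}(a-\mathcal H_h)^{-K}$, one shows inductively that $\partial_h^r[(\lambda-\mathcal H_h)^{-1}(a-\mathcal H_h)^{-K}]$ is a finite linear combination of products of the schematic form
\[
R_0\,A_{k_1}\,R_1\,A_{k_2}\,R_2\cdots A_{k_q}\,R_q,
\]
where each $R_j$ is either $(\lambda-\mathcal H_h)^{-1}$ or $(a-\mathcal H_h)^{-1}$, the total number of resolvents is $q+K+1$, the integers $k_i\geq 1$ satisfy $k_1+\cdots+k_q=r$, and $A_j:=\partial_h^j\mathcal H_h$.

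The crucial technical ingredient is a norm estimate on $A_j$. Inspection of \eqref{e:Ht} and \eqref{E:nablatej} shows that the coefficients of $\mathcal H_h$ depend on $h$ only through the scaled argument $hZ$ (and via bounded factors of $h$), so the chain rule gives $\partial_h f(hZ)=\sum_\ell Z_\ell(\partial_\ell f)(hZ)$ for any $f\in C^\infty_b$. Consequently, $A_j$ is a second-order differential operator whose coefficients have polynomial growth in $Z$ of degree at most $j$ with amplitudes bounded in $C^\infty_b$ uniformly in $h\in[0,1]$ and $x_0\in X$. A repetition of the commutator argument used in \eqref{e:est-nablat} and \eqref{e:est-comm-res} then yields
\[
\|A_j s\|_{m,M}\leq C\|s\|_{m+2,M+j},\quad s\in C^\infty_c(\mathbb R^d,E_{x_0}),
\]
for constants $C=C_{j,m,M}$ independent of $h$ and $x_0$.

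Combining this bound with Theorem \ref{Thm1.9}, each product above maps $H^{m-2K-2}_{M+r}$ into $H^{m}_{M}$ with operator norm $O(|\nu|^{-N})$ for some $N=N(M,r,K)$: each resolvent gains two derivatives of smoothness and preserves the polynomial-weight index at cost $|\nu|^{-M'-1}$ (for the current weight $M'$), while each $A_{k_i}$ costs two derivatives of smoothness and $k_i$ units of polynomial weight, with $\sum k_i=r$. Integrating the expansion of $\partial_h^r\varphi(\mathcal H_h)$ against $\partial_{\bar\lambda}\tilde\varphi(\lambda)(a-\lambda)^K$ and invoking the fact that $\tilde\varphi$ can be chosen so that $\partial_{\bar\lambda}\tilde\varphi(\lambda)=O(|\nu|^\ell)$ for any $\ell\in\NN$ absorbs the resulting $|\nu|^{-N}$ singularity. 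Choosing $K$ sufficiently large (depending on $m,m'$) so that $m-2K-2\leq m'$, and using the monotonicity $\|s\|_{m-2K-2,M+r}\leq\|s\|_{m',M+r}\leq\|s\|_{m',M+2r}$, yields the claimed estimate. The main obstacle is the combinatorial bookkeeping of the many terms arising from the iterated Leibniz expansion together with the careful tracking of smoothness and polynomial-weight indices through each factor, but each individual estimate is routine once Theorem \ref{Thm1.9} and the polynomial-growth structure of $\partial_h^j\mathcal H_h$ are in place.
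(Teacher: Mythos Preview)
Your proposal is correct and follows essentially the same approach as the paper: differentiate \eqref{e:HS1} under the integral, expand $\partial_h^r$ of the resolvent product via the iterated resolvent identity/Leibniz rule into sums of chains $R_0 A_{k_1}R_1\cdots A_{k_q}R_q$, use the polynomial-growth structure of $A_j=\partial_h^j\mathcal H_h$ (your bound $\|A_j s\|_{m,M}\le C\|s\|_{m+2,M+j}$ is exactly the paper's \eqref{e:partial-rL}), and then absorb the resulting $|\nu|^{-N}$ factor using the almost-analytic extension and a large enough $K$. The only cosmetic difference is that the paper records the chains via the index set $I_r$ in \eqref{diff}, and your weight bookkeeping yields $M+r$ which you then relax to $M+2r$ by monotonicity of the norms, matching the stated theorem.
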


\begin{proof}
We proceed as in the proof of \cite[Theorem 1.10]{ma-ma08}. Taking $r$th derivatives in $h$ of both parts of \eqref{e:HS1}, we get
\begin{multline}\label{e:diff-phi}
\frac{\partial^r}{\partial h^r}\varphi(\mathcal H_{h})\\ =-\frac{1}{\pi }
\int_\CC \frac{\partial \tilde{\varphi}}{\partial \bar \lambda}(a-\lambda)^K \frac{\partial^r}{\partial h^r}\left[(\lambda-\mathcal H_{h})^{-1}(a-\mathcal H_{h})^{-K}\right] d\mu d\nu.
\end{multline}
We set 
\[
I_{r}=\left\{ {\mathbf r}=(r_1,\ldots,r_j) : \sum_{i=1}^jr_i=r, r_i\in \mathbb N \right\}. 
\]
Then we can write
\begin{multline}\label{diff}
\frac{\partial^r}{\partial h^r}(\lambda-\mathcal H_{h})^{-1}\\ =\sum_{{\mathbf r}\in I_{r}} \frac{r!}{{\mathbf r}!} (\lambda - \mathcal H_{h})^{-1} \frac{\partial^{r_1}\mathcal H_{h}}{\partial h^{r_1}}(\lambda - \mathcal H_{h})^{-1}\cdots  \frac{\partial^{r_j}\mathcal H_{h}}{\partial h^{r_j}}(\lambda - \mathcal H_{h})^{-1}.
\end{multline}
 
Observe that, for $g\in C^\infty_b(\RR^{d})$ and $r\geq 0$, the function $\frac{\partial^r}{\partial h^r}(g(hZ))$  has the form 
$$
\frac{\partial^r}{\partial h^r}(g(hZ))=\sum_{|\beta|\leq r}g_\beta (hZ)Z^\beta
$$ 
with some $g_\beta\in C^\infty_b(\RR^{d})$. 
By \eqref{E:nablatej}, for $r>0$, the operator $\frac{\partial^r}{\partial h^r}\nabla_{h,e_j}$ has the form $$\frac{\partial^r}{\partial h^r}\nabla_{h,e_j}=\sum_{|\beta|\leq r}(f_\beta(hZ)+hg_\beta(hZ))Z^\beta$$ with some $f_\beta, g_\beta\in C^\infty_b(\RR^{d})$. 

Using these facts, \eqref{e:Ht} and \eqref{e:Z-nabla-com}, one can easily see that for any $r$ the operator $\frac{\partial^{r}\mathcal H_{h}}{\partial h^{r}}$ is a second order differential operator of the form
\begin{equation}
\begin{aligned}\label{e:partial-rLt}
\frac{\partial^{r}\mathcal H_{h}}{\partial h^{r}}= & \sum_{j,k=1}^{d}\sum_{|\beta|\leq r} A^{jk}_\beta (hZ)Z^\beta \nabla_{h;e_j}\nabla_{h;e_k}\\ & +\sum_{\ell=1}^{d}\sum_{|\beta|\leq r} (B^{\ell,0}_{\beta}(hZ)+hB^{\ell,1}_{\beta}(hZ))Z^\beta\nabla_{h,e_\ell}\\ & +\sum_{|\beta|\leq r} (C^0_\beta(hZ)+h C^1_\beta(hZ)+h^2 C^2_\beta(hZ)) Z^\beta
\end{aligned}
\end{equation}
with some $A^{jk}_\beta, B^{\ell,0}_{\beta}, B^{\ell,1}_{\beta}, C^0_\beta, C^1_\beta, C^2_\beta\in C^\infty_b(\RR^{d})$.

It follows that, for any $m\in \NN$ and $M\in \mathbb Z_+$, there exists $C>0$ such that, for all $h\in (0,1]$ and $x_0\in X$,  
\begin{equation}\label{e:partial-rL}
\left\|\frac{\partial^{r}\mathcal H_{h}}{\partial h^{r}} s\right\|_{m,M}\leq C\left\|s\right\|_{m+2,M+r}, \quad s\in C^\infty_c(\mathbb R^{d},E_{x_0}). 
\end{equation}

Using \eqref{diff}, \eqref{e:mm+2t} and \eqref{e:partial-rL}, we obtain that for any $m\in \ZZ$ and $M\in \mathbb Z_+$, there exists $C>0$ such that for $\lambda\in \CC\setminus\RR$, $h\in (0,1]$ and $x_0\in X$, 
\begin{multline}\label{e:res-mm-prime}
\left\| \frac{\partial^r}{\partial h^r}\left[(\lambda-\mathcal H_{h})^{-1}(a-\mathcal H_{h})^{-K}\right] s\right\|_{m,M}\\ \leq C_{m,M}|\nu|^{-(M+r+1)}\left\|s\right\|_{m-2(K+1),M+2r}, \quad s\in C^\infty_c(\mathbb R^{d},E_{x_0}). 
\end{multline} 
Using \eqref{e:diff-phi} with appropriate $K$ and \eqref{e:res-mm-prime}, we complete the proof. 
\end{proof}

\section{Proof of the main theorem}\label{proof}
\subsection{Existence of the asymptotic expansion}
By the mean-value theorem and Theorem \ref{est-rem}, for any $s\in C^\infty_c(\mathbb R^{d},E_{x_0})$ and $r\geq 0$, there exists the limit 
\begin{equation}\label{e:defFr}
\lim_{h\to 0}\frac{1}{r!}\frac{\partial^r}{\partial h^r}\varphi(\mathcal H_{h}) s =F_{r}s
\end{equation}
in the norm $\|\cdot\|_{m,M}$ with any $m\in \ZZ$ and $M\in \mathbb Z_+$. Moreover, the limit $F_{r}s$ satisfies the estimates
\[
\left\|F_{r} s\right\|_{m,M}\leq C\|s\|_{m^\prime,M+2r}, \quad s\in C^\infty_c(\mathbb R^{d},E_{x_0}).
\] 
for any $r\geq 0$, $m, m^\prime\in \ZZ$, and $M\in \mathbb Z_+$, where $C>0$ is independent of $s$ and $x_0$. 
 
Next, we convert norm estimates of operators into pointwise estimates of the Schwartz kernels, using the Sobolev embedding theorem. We use the following theorem (cf. \cite[Theorem 5.1]{Bochner-trace}).  
 
\begin{thm}\label{t:pointwise}
Assume that $\{K_h : C^\infty_c(\mathbb R^{d},E_{x_0}) \to C^\infty(\mathbb R^{d},E_{x_0}) : h\in (0,1]\}$ is a family of operators with smooth kernel $K_h(Z,Z^\prime)$ such that there exists $K>0$ such that, for any $m, m^\prime \in \NN$ and $M\in \mathbb Z_+$, there exists $C>0$ such that, for any $h\in (0,1]$, 
\begin{equation}\label{e:est-Kt}
\|K_hs\|_{m,M}\leq C\|s\|_{-m^\prime ,M+K}, \quad s\in C^\infty_c(\mathbb R^{d},E_{x_0}).
\end{equation}
Then, for any $m\in \mathbb N$, there exists $M^\prime>0$ such that for any $N\in \NN$ there exists $C>0$ such that, for any $h\in (0,1]$ and $Z,Z^\prime\in \RR^{d}$ 
\[
\sup_{|\alpha|+|\alpha^\prime|\leq m}\Bigg|\frac{\partial^{|\alpha|+|\alpha^\prime|}}{\partial Z^\alpha\partial Z^{\prime\alpha^\prime}}K_{h}(Z,Z^\prime)\Bigg| 
\leq C(1+|Z|+|Z^\prime|)^{M^\prime}. 
\]
\end{thm}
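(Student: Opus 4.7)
The strategy is to turn the operator bound \eqref{e:est-Kt} into a pointwise bound by combining Sobolev embedding $H^{m_0}\hookrightarrow C^0$ (for $m_0>d/2$), which will extract values at $Z=Z_0$, with a duality argument: test $K_h$ against a mollified derivative of the Dirac mass at $Z_0'$ in order to recover the kernel and its $Z'$-derivatives at $Z'=Z_0'$. The polynomial weights appearing on the right-hand side of \eqref{e:est-Kt} will translate into polynomial growth in $|Z_0'|$ via support localization of the test function.

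\textbf{Test functions.} Fix $m\in \NN$, multi-indices $\alpha,\alpha'$ with $|\alpha|,|\alpha'|\le m$, and points $Z_0,Z_0'\in \RR^d$. Choose $\phi\in C^\infty_c(B(0,1))$ with $\int\phi=1$, and for $\epsilon\in (0,1]$ define
\[
s_{\epsilon,\alpha'}(Y):=(-1)^{|\alpha'|}\epsilon^{-d-|\alpha'|}(\partial^{\alpha'}\phi)\!\left(\frac{Y-Z_0'}{\epsilon}\right),
\]
which is supported in $B(Z_0',\epsilon)$ and converges to $(-1)^{|\alpha'|}\partial^{\alpha'}\delta_{Z_0'}$ as $\epsilon\to 0$. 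A direct Plancherel computation using $\hat s_{\epsilon,\alpha'}(\xi)=(-i)^{|\alpha'|}e^{-iZ_0'\cdot\xi}\xi^{\alpha'}\hat\phi(\epsilon\xi)$ gives $\|s_{\epsilon,\alpha'}\|_{-m'}\le C$ uniformly in $\epsilon$ whenever $m'>d/2+|\alpha'|$. Since $\supp s_{\epsilon,\alpha'}\subset B(Z_0',1)$, multiplication by $Y^\beta$ (cut off smoothly near the support) is an operator on $H^{-m'}$ of norm bounded by a constant times $(1+|Z_0'|)^{|\beta|}$, which yields the uniform bound
\[
\|s_{\epsilon,\alpha'}\|_{-m',K}\le C(1+|Z_0'|)^{K}.
\]

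\textbf{Pointwise bound and main difficulty.} Applying \eqref{e:est-Kt} with $m$ replaced by $m_0+|\alpha|$, with $m'$ as above, and with $M=0$, followed by Sobolev embedding, yields
\[
|\partial_Z^\alpha(K_h s_{\epsilon,\alpha'})(Z_0)|\le C\,\|K_h s_{\epsilon,\alpha'}\|_{m_0+|\alpha|}\le C(1+|Z_0'|)^{K},
\]
uniformly in $h\in (0,1]$ and $\epsilon\in (0,1]$. Integration by parts in $Y$ together with the smoothness of $K_h$ shows that $\partial_Z^\alpha(K_h s_{\epsilon,\alpha'})(Z_0)\to \partial_Z^\alpha\partial_{Z'}^{\alpha'}K_h(Z_0,Z_0')$ as $\epsilon\to 0$. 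Therefore
\[
|\partial_Z^\alpha\partial_{Z'}^{\alpha'}K_h(Z_0,Z_0')|\le C(1+|Z_0'|)^{K}\le C(1+|Z_0|+|Z_0'|)^{K},
\]
which is the claimed estimate with $M'=K$ (independent of $m$). The main technical point will be the uniform-in-$\epsilon$ control of the weighted $H^{-m'}$ norm of $s_{\epsilon,\alpha'}$; everything else (Sobolev embedding, integration by parts, and the $\epsilon\to 0$ limit) is routine.
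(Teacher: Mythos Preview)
The paper does not actually supply a proof of this theorem: it is stated with a reference to \cite[Theorem 5.1]{Bochner-trace} and then used. So there is no in-paper argument to compare yours against.

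Your argument is correct and is the standard route one would expect: mollified derivatives of a Dirac mass sit uniformly in $H^{-m'}$ for $m'$ large enough, the support localization in $B(Z_0',1)$ converts the weights $Y^\beta$ in $\|\cdot\|_{-m',K}$ into a factor $(1+|Z_0'|)^K$ (your cutoff remark is exactly the right justification, since multiplication by the fixed $C^\infty_c$ function $\chi(Y)Y^\beta$ has $H^{-m'}$-operator norm controlled by $\|\chi(\cdot)\,\cdot^\beta\|_{C^{m'}}\le C(1+|Z_0'|)^{|\beta|}$), and then \eqref{e:est-Kt} with $M=0$ plus Sobolev embedding $H^{m_0+|\alpha|}\hookrightarrow C^{|\alpha|}$ gives the pointwise bound. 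The $\epsilon\to 0$ limit is immediate from smoothness of $K_h$. You in fact obtain $M'=K$ independent of $m$, which is slightly sharper than the statement requires; note also that the quantifier ``for any $N\in\NN$'' in the statement is vacuous here (a leftover from a related off-diagonal decay statement), so nothing needs to be done with it. One cosmetic point: the Sobolev norms in the paper are written with exponent $(1+|\xi|^2)^{m/2}$ rather than $(1+|\xi|^2)^m$, so your threshold $m'>d/2+|\alpha'|$ should be adjusted accordingly, but this only affects the choice of constants.
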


Let $K_{\varphi(\mathcal H_{h})}(Z,Z^\prime)=K_{\varphi(\mathcal H_{h,x_0})}(Z,Z^\prime)$ be the smooth Schwartz kernel of the operator $\varphi(\mathcal H_{h})$ with respect to $dZ$. 
Let $F_r(Z,Z^\prime)$ be the smooth Schwartz kernel of the operator $F_r : C^\infty_c(\mathbb R^{d},E_{x_0})\to C^\infty(\mathbb R^{d},E_{x_0})$.
From the Taylor formula 
\[
\varphi(\mathcal H_{h})-\sum_{r=0}^j F_{r} h^r=\frac{1}{j!}\int_0^h(h-\tau)^j\frac{\partial^{j+1}\varphi(\mathcal H_{h})}{\partial h^{j+1}}(\tau) d\tau, \quad h\in [0,1],  
\]
Theorems \ref{est-rem} and \ref{t:pointwise}, we immediately get the following. 

\begin{thm}\label{t:thm7.2}
For any $j,m\in \mathbb N$, there exists $M>0$ such that, for any $N\in \NN$,  there exists $C>0$ such that for any $h\in (0,1]$, $Z,Z^\prime\in \RR^{d}$ and $x_0\in X$, 
\begin{multline*}
\sup_{|\alpha|+|\alpha^\prime|\leq m}\Bigg|\frac{\partial^{|\alpha|+|\alpha^\prime|}}{\partial Z^\alpha\partial Z^{\prime\alpha^\prime}}\Bigg(K_{\varphi(\mathcal H_{h})}(Z,Z^\prime)
-\sum_{r=0}^jF_{r}(Z,Z^\prime)h^r\Bigg)\Bigg| \\
\leq Ch^{j+1}(1+|Z|+|Z^\prime|)^{M}. 
\end{multline*}
\end{thm}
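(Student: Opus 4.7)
The plan is to combine Taylor's theorem in the parameter $h$ with the operator-norm estimates of Theorem~\ref{est-rem} and then convert to pointwise kernel estimates via Theorem~\ref{t:pointwise}. Writing
\[
R_{j,h}:=\varphi(\mathcal H_{h})-\sum_{r=0}^{j}F_{r}h^{r}=\frac{1}{j!}\int_{0}^{h}(h-\tau)^{j}\frac{\partial^{j+1}\varphi(\mathcal H_{h})}{\partial h^{j+1}}\bigg|_{h=\tau}\,d\tau,
\]
the task reduces to controlling the Schwartz kernel of $R_{j,h}$ and its derivatives pointwise, with polynomial growth in $Z,Z'$.

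First I would apply Theorem~\ref{est-rem} with $r$ replaced by $j+1$ to obtain, for arbitrary $m,m^{\prime}\in \ZZ$ and $M\in \ZZ_{+}$, the bound
\[
\left\|\frac{\partial^{j+1}\varphi(\mathcal H_{\tau})}{\partial h^{j+1}}\,s\right\|_{m,M}\leq C\|s\|_{m^{\prime},\,M+2(j+1)},
\]
uniformly for $\tau\in(0,1]$ and $x_{0}\in X$. Integrating over $\tau\in[0,h]$ and using $|h-\tau|^{j}\le h^{j}$ yields
\[
\|R_{j,h}s\|_{m,M}\leq C\,h^{j+1}\|s\|_{m^{\prime},\,M+2(j+1)},\qquad s\in C^{\infty}_{c}(\mathbb R^{d},E_{x_{0}}).
\]
Hence the rescaled family $h^{-(j+1)}R_{j,h}$ satisfies the hypothesis \eqref{e:est-Kt} of Theorem~\ref{t:pointwise} with $K=2(j+1)$, uniformly in $h\in(0,1]$ and $x_{0}\in X$.

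Applying Theorem~\ref{t:pointwise} to the operator family $h^{-(j+1)}R_{j,h}$ then produces, for each prescribed $m\in\NN$, an integer $M^{\prime}$ and a constant $C$ such that
\[
\sup_{|\alpha|+|\alpha^{\prime}|\leq m}\bigg|\frac{\partial^{|\alpha|+|\alpha^{\prime}|}}{\partial Z^{\alpha}\partial Z^{\prime\alpha^{\prime}}}\bigl(h^{-(j+1)}R_{j,h}(Z,Z^{\prime})\bigr)\bigg|\leq C(1+|Z|+|Z^{\prime}|)^{M^{\prime}},
\]
uniformly in $h\in(0,1]$, $Z,Z^{\prime}\in\RR^{d}$ and $x_{0}\in X$. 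Multiplying through by $h^{j+1}$ and identifying $R_{j,h}(Z,Z')$ with $K_{\varphi(\mathcal H_h)}(Z,Z')-\sum_{r=0}^{j}F_{r}(Z,Z')h^{r}$ gives the assertion, with $M:=M^{\prime}$.

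The only genuinely non-mechanical point is to confirm that the constants produced at each stage are independent of the base point $x_{0}$. This uniformity is already built into Theorem~\ref{est-rem} (whose proof is phrased uniformly in $x_{0}$ via the uniform ellipticity and uniform $C^{\infty}_{b}$-bounds on the coefficients of $\mathcal H_{h}$), so it persists in the estimate for $R_{j,h}$. Theorem~\ref{t:pointwise} is a Euclidean statement whose constants depend only on $d$, the Sobolev exponents, and the weight $M$, so no $x_{0}$-dependence is introduced in passing from norm bounds to pointwise kernel bounds. Thus the uniform polynomial-in-$|Z|+|Z'|$ bound with the factor $h^{j+1}$ follows directly, and no further work is needed.
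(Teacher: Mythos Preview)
Your proof is correct and follows exactly the paper's approach: write the Taylor remainder $R_{j,h}$ via the integral formula, apply Theorem~\ref{est-rem} to obtain uniform $\|\cdot\|_{m,M}$ bounds with the factor $h^{j+1}$, and then convert to pointwise kernel estimates using Theorem~\ref{t:pointwise}. The paper states this in a single sentence, while you spell out the intermediate step of rescaling by $h^{-(j+1)}$ to fit the hypothesis \eqref{e:est-Kt}; this is a harmless clarification, not a different argument.
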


By \eqref{scaling}, we have
\[
K_{\varphi(H^{(x_0)}_p)}(Z,Z^\prime)=h^{-d}\kappa^{-\frac 12}(Z)K_{\varphi(\mathcal H_{h})}(Z/h,Z^\prime/h)\kappa^{-\frac 12}(Z^\prime), \quad Z,Z^\prime \in \mathbb R^{d},
\]
that completes the proof of the asymptotic expansion \eqref{e:main-exp} in Theorem~\ref{t:main}.

\subsection{General formulas for the coefficients}\label{s:computation}
In this section, we complete the proof of Theorem~\ref{c:main} by deriving the formulas \eqref{e:f0-d} and \eqref{e:fr2-form} for the coefficients in asymptotic expansion \eqref{e:main-exp} in Theorem~\ref{t:main}. 
  
Since $\lim_{h\to 0}\frac{\partial^{r}\mathcal H_{h}}{\partial h^{r}}=r!\mathcal H^{(r)}$, from \eqref{diff}, we get
\begin{multline}\label{diff0}
\lim_{h\to 0}\frac{\partial^r}{\partial h^r}(\lambda-\mathcal H_{h})^{-1}\\ =\sum_{{\mathbf r}\in I_{r}} {r}!(\lambda - \mathcal H^{(0)})^{-1} \mathcal H^{(r_1)}(\lambda - \mathcal H^{(0)})^{-1}\cdots  \mathcal H^{(r_j)}(\lambda - \mathcal H^{(0)})^{-1}.
\end{multline}
By \eqref{e:diff-phi} with $K=0$ and \eqref{e:defFr}, we infer that
\begin{multline}\label{e:Fr-phi}
F_r= -\frac{1}{\pi }\sum_{{\mathbf r}\in I_{r}}\int_\CC \frac{\partial \tilde{\varphi}}{\partial \bar \lambda}(\lambda) (\lambda - \mathcal H^{(0)})^{-1}\\ \times \mathcal H^{(r_1)}(\lambda - \mathcal H^{(0)})^{-1}\cdots  \mathcal H^{(r_j)}(\lambda - \mathcal H^{(0)})^{-1} d\mu d\nu.
\end{multline}

For $r=0$, we immediately get
\[
F_0= -\frac{1}{\pi }\int_\CC \frac{\partial \tilde{\varphi}}{\partial \bar \lambda}(\lambda) (\lambda - \mathcal H^{(0)})^{-1} d\mu d\nu=\varphi(\mathcal H^{(0)}),
\]
which shows that the leading coefficient $F_{0,x_0}$ in the asymptotic expansion \eqref{e:main-exp} is the Schwartz kernel $K_{\varphi(\mathcal H^{(0)})}$ of the corresponding function $\varphi(\mathcal H^{(0)})$ of the operator $\mathcal H^{(0)}$:
\begin{equation}\label{e:F0}
F_{0,x_0}(Z,Z^\prime)=K_{\varphi(\mathcal H^{(0)})}(Z,Z^\prime).
\end{equation}
as a consequence, we get 
\begin{equation}\label{e:f0}
f_{0}(x_0)=K_{\varphi(\mathcal H^{(0)})}(0,0).
\end{equation}

Using separation of variables and the Fourier transform, we infer that
\begin{equation}\label{e:Phi-kernel}
K_{\varphi (\mathcal H^{(0)})}(Z,Z^\prime)=\frac{1}{(2\pi)^{d}} \int_{\RR^{d}} e^{i(Z-Z^\prime)\xi}\varphi(|\xi|^2+V(x_0))d\xi
\end{equation}
and
\[
K_{\varphi (\mathcal H^{(0)})}(Z,Z)=\frac{1}{(2\pi)^{d}} \int_{\RR^{d}} \varphi(|\xi|^2+V(x_0))d\xi,
\]
which gives \eqref{e:f0-d} by \eqref{e:f0}.

Now we compute lower order coefficients.  We have
\[
\nabla_{e_m}\mathcal H^{(0)}=\mathcal H^{(0)}\nabla_{e_m},
\]
which implies that
\begin{equation}
\nabla_{e_m}(\lambda-\mathcal H^{(0)})^{-1}=(\lambda-\mathcal H^{(0)})^{-1}\nabla_{e_m}.
\label{e:comm-res1}
\end{equation}
For any polynomial $g(Z)$ with values in $\operatorname{End}(E_{x_0})$,  we have
\[
g\mathcal H^{(0)}=\mathcal H^{(0)}g+2\sum_{j=1}^d\frac{\partial g}{\partial Z_j}\nabla_{e_j}-\Delta g
\]
which implies that
\begin{multline}
(\lambda-\mathcal H^{(0)})^{-1}g= g(\lambda-\mathcal H^{(0)})^{-1}\\
+ (\lambda-\mathcal H^{(0)})^{-1}  \left(-2\sum_{j=1}^d\frac{\partial g}{\partial Z_j}\nabla_{e_j}+\Delta g\right) (\lambda-\mathcal H^{(0)})^{-1}.
\label{e:comm-g-res}
\end{multline}
By \eqref{e:comm-res1} and \eqref{e:comm-g-res}, it follows that for any differential operator $A$ of order $s$ with polynomial coefficients of degree $\leq r$, we have 
%
%
\[
(\lambda-\mathcal H^{(0)})^{-1}A= A(\lambda-\mathcal H^{(0)})^{-1} 
+ (\lambda-\mathcal H^{(0)})^{-1}  A_1 (\lambda-\mathcal H^{(0)})^{-1}.
\]
where $A_1$ is a differential operator of order $s+1$ with polynomial coefficients of degree $\leq r-1$. 
 
Iterating this formula, after finitely many steps, we will arrive at an expression of the form
\[
(\lambda-\mathcal H^{(0)})^{-1}A=A(\lambda-\mathcal H^{(0)})^{-1} 
+ \sum_{j=1}^rA_{j}(\lambda-\mathcal H^{(0)})^{-j-1},
\] 
where $A_j$, $j=1,\ldots,r$, is a differential operator of order $s+j$ with polynomial coefficients of degree $\leq r-j$.  Let us introduce a grading on the algebra of differential operators on $C^\infty(\RR^d,E_{x_0})$ with polynomial coefficients, setting the weight of each $Z_j$ and $\frac{\partial}{\partial Z_k}$ equal to $1$. Then the weight of each $A_j$, $j=1,\ldots,r$ does not exceed the weight of $A$. 


By \eqref{e:Hj}, the coefficients of $\mathcal H^{(r)}$ are polynomials of weight $r+2$. It follows that 
\begin{multline*}
(\lambda - \mathcal H^{(0)})^{-1} \mathcal H^{(r_1)}(\lambda - \mathcal H^{(0)})^{-1}\cdots  \mathcal H^{(r_j)}(\lambda - \mathcal H^{(0)})^{-1} \\ =\sum_{\ell=1}^{N(\mathbf r)} D_{\ell,\mathbf r}(\lambda-\mathcal H^{(0)})^{-\ell},
\end{multline*} 
where $D_{\ell,\mathbf r}$ is a differential operator of weight $\leq r+2j$. 

By this formula and \eqref{e:Fr-phi}, we conclude that
\[
F_r= -\frac{1}{\pi }\sum_{{\mathbf r}\in I_{r}}  \sum_{\ell=1}^{N(\mathbf r)} D_{\ell,\mathbf r}  \int_\CC \frac{\partial \tilde{\varphi}}{\partial \bar \lambda}(\lambda)(\lambda -\mathcal H^{(0)})^{-\ell} d\mu d\nu.
\]
By differentiating the Dynkin-Helffer-Sj\"{o}strand formula \eqref{e:HS}, we get  
\begin{equation}\label{e:phik}
\varphi^{(k)}(\mathcal H^{(0)})=-\frac{k!}{\pi }
\int_\CC \frac{\partial \tilde{\varphi}}{\partial \bar \lambda}(\lambda)(\lambda-\mathcal H^{(0)})^{-k-1}d\mu d\nu.
\end{equation}
It follows that
\[
F_r=\sum_{\ell=1}^{N_r} D_{\ell,r} \varphi^{(\ell-1)}(\mathcal H^{(0)}),
\]
where $D_{\ell,r}=\frac{1}{(\ell-1)!}\sum_{{\mathbf r}\in I_{r}} D_{\ell,\mathbf r}$ is a differential operator of weight $3r$ and $N_r=\max_{\mathbf r\in I_r} N(\mathbf r)$. If we write 
$$
D_{\ell,r}=\sum_{|\alpha|\leq 3r} c_{\ell,r,\alpha}(x_0)\nabla^\alpha 
$$
with some $c_{\ell,r,\alpha}(x_0)\in \operatorname{End}(E_{x_0})$, we get
\begin{equation}\label{e:fr2}
f_r(x_0)=  F_r(0,0)=\sum_{\ell=1}^{N_r}\sum_{|\alpha|\leq 3r}  c_{\ell,r,\alpha}(x_0)  K_{\nabla^\alpha\varphi^{(\ell-1)}(\mathcal H^{(0)})}(0,0).
\end{equation}
 By \eqref{e:Phi-kernel}, we have
\begin{equation}\label{e:nablaPhi-kernel}
K_{\nabla^\alpha \varphi (\mathcal H^{(0)})}(Z,Z^\prime)=\frac{1}{(2\pi)^{d}} \int_{\RR^{d}} e^{i(Z-Z^\prime)\xi}(i\xi)^\alpha \varphi(|\xi|^2+V(x_0))d\xi.
\end{equation}
By \eqref{e:nablaPhi-kernel} and \eqref{e:fr2}, we get \eqref{e:fr2-form} with 
\[
P_{r,\ell,x_0}(\xi)=\sum_{|\alpha|\leq 3r}  c_{\ell,r,\alpha}(x_0)(i\xi)^\alpha.  
\]
Since the coefficients of $\mathcal H^{(r)}$ are universal polynomials depending on a finite number of derivatives of $V$ and the curvatures $R^{TX}$, $R^L$ and $R^E$ in $x_0$, it is easy to see that the same is true for the coefficients of $D_{\ell,\mathbf r}$ and $D_{\ell,r}$, that is, for $c_{\ell,r,\alpha}(x_0)$. 

\subsection{Explicit computations of the coefficients}\label{s:ex-computation}
In this section, we derive explicit formulas for the coefficients $f_1(x_0)$ and $f_2(x_0)$ in the setting of Examples~\ref{gauge} and~\ref{gauge2} (cf. \cite[Theorem 1]{CdV12}):
\begin{equation}\label{e:exf1}
f_1(x_0)=0.
\end{equation}
\begin{multline}\label{e:exf2}
f_{2}(x_0)=-\frac{1}{12}\sum_{j=1}^d\left(\frac{\partial V}{\partial x_j}(x_0)\right)^2 \frac{1}{(2\pi)^{d}} \int_{\RR^{d}}\varphi^{\prime\prime\prime}(|\xi|^2+V(x_0))d\xi \\
-\frac{1}{6} \left(\frac{1}{2}\sum_{j,k=1}^dB_{k j}(x_0)^2 +\sum_{j=1}^d\frac{\partial^2 V}{\partial x^2_j}(x_0)\right)\frac{1}{(2\pi)^{d}} \int_{\RR^{d}}\varphi^{\prime\prime}(|\xi|^2+V(x_0))d\xi.
\end{multline}

By \eqref{e:Fr-phi}, we have 
\begin{equation}\label{e:Fr-phi1}
F_1= -\frac{1}{\pi }\int_\CC \frac{\partial \tilde{\varphi}}{\partial \bar \lambda}(\lambda)  (\lambda - \mathcal H^{(0)})^{-1}\mathcal H^{(1)}(\lambda - \mathcal H^{(0)})^{-1} d\mu d\nu.
\end{equation}

If $g$ is a polynomial of degree $\leq 1$, then, by \eqref{e:comm-g-res}, we have
\begin{equation}\label{e:comm-g-res1}
(\lambda-\mathcal H^{(0)})^{-1}g
= g(\lambda-\mathcal H^{(0)})^{-1}
-2\sum_{j=1}^d\frac{\partial g}{\partial Z_j}\nabla_{e_j} (\lambda-\mathcal H^{(0)})^{-2}.
\end{equation}

By \eqref{e:H1} and \eqref{e:comm-g-res1}, using the fact that the matrix $(B_{kj})$ is skew-symmetric, we have
\begin{multline}\label{e:H0H1}
(\lambda-\mathcal H^{(0)})^{-1}\mathcal H^{(1)}  
= \mathcal H^{(1)} (\lambda-\mathcal H^{(0)})^{-1}\\
-2\left[\sum_{j,k=1}^{d}iB_{k j}(x_0)\,\nabla_{e_k}\nabla_{e_j}+\sum_{j=1}^d\frac{\partial V}{\partial x_j}(x_0)\nabla_{e_j}\right] (\lambda-\mathcal H^{(0)})^{-2}\\
= \mathcal H^{(1)}  (\lambda-\mathcal H^{(0)})^{-1}
-2\sum_{j=1}^d\frac{\partial V}{\partial x_j}(x_0)\nabla_{e_j} (\lambda-\mathcal H^{(0)})^{-2}. 
\end{multline}
By \eqref{e:Fr-phi1} and \eqref{e:phik}, we get
\[
F_1=\mathcal H^{(1)}\varphi^\prime (\mathcal H^{(0)})\\
-\sum_{j=1}^d\frac{\partial V}{\partial x_j}(x_0)\nabla_{e_j} \varphi^{\prime\prime} (\mathcal H^{(0)}).
\]
Now we consider the corresponding Schwartz kernels and set $Z=Z^\prime=0$.
By \eqref{e:nablaPhi-kernel}, for any $\psi\in C^\infty_c(\mathbb R^d)$, we have 
\begin{equation}\label{e:nablaejPhi-kernel}
K_{\nabla_{e_j} \psi (\mathcal H^{(0)})}(0,0)=\frac{1}{(2\pi)^{d}} \int_{\RR^{d}} i\xi_j \psi(|\xi|^2+V(x_0))d\xi=0,
\end{equation}
since the integrand is an odd function. It follows that
\[
f_1(x_0)=F_1(0,0)=-\sum_{j=1}^d\frac{\partial V}{\partial x_j}(x_0)K_{\nabla_{e_j}\varphi^{\prime\prime} (\mathcal H^{(0)})}(0,0)=0,
\]
that proves \eqref{e:exf1}.

By \eqref{e:Fr-phi}, we have 
\[
F_2=F_{2,1}+F_{2,2},
\]
where
\begin{align}
F_{2,1}=&-\frac{1}{\pi }\int_\CC \frac{\partial \tilde{\varphi}}{\partial \bar \lambda}(\lambda)  (\lambda - \mathcal H^{(0)})^{-1}\mathcal H^{(1)}(\lambda - \mathcal H^{(0)})^{-1}\mathcal H^{(1)}(\lambda - \mathcal H^{(0)})^{-1} d\mu d\nu,\label{e:F21}\\
F_{2,2}=&-\frac{1}{\pi }\int_\CC \frac{\partial \tilde{\varphi}}{\partial \bar \lambda}(\lambda)  (\lambda - \mathcal H^{(0)})^{-1}\mathcal H^{(2)}(\lambda - \mathcal H^{(0)})^{-1} d\mu d\nu. \label{e:F22}
\end{align}

Taking the adjoints in \eqref{e:H0H1}, we get
\begin{multline}\label{e:H1H0}
\mathcal H^{(1)} (\lambda-\mathcal H^{(0)})^{-1}  
\\
=  (\lambda-\mathcal H^{(0)})^{-1}\mathcal H^{(1)} 
+2(\lambda-\mathcal H^{(0)})^{-2}\sum_{j=1}^d\frac{\partial V}{\partial x_j}(x_0)\nabla_{e_j}. 
\end{multline}
Now we use \eqref{e:H0H1} and \eqref{e:H1H0}:
\begin{multline}\label{e:H0H1H0H1H0}
 (\lambda - \mathcal H^{(0)})^{-1}\mathcal H^{(1)}(\lambda - \mathcal H^{(0)})^{-1}\mathcal H^{(1)}(\lambda - \mathcal H^{(0)})^{-1}\\
 \begin{aligned}
=&\mathcal H^{(1)}  (\lambda-\mathcal H^{(0)})^{-4}\mathcal H^{(1)}+2\mathcal H^{(1)}  (\lambda-\mathcal H^{(0)})^{-2}\sum_{j=1}^d\frac{\partial V}{\partial x_j}(x_0)\nabla_{e_j}\\
&-2\sum_{j=1}^d\frac{\partial V}{\partial x_j}(x_0)\nabla_{e_j} (\lambda-\mathcal H^{(0)})^{-4}\mathcal H^{(1)}\\
&-4\sum_{j=1}^d\frac{\partial V}{\partial x_j}(x_0)\nabla_{e_j} (\lambda-\mathcal H^{(0)})^{-5}\sum_{k=1}^d\frac{\partial V}{\partial x_k}(x_0)\nabla_{e_k}.
\end{aligned}
\end{multline}
By \eqref{e:F21}, \eqref{e:H0H1H0H1H0} and \eqref{e:phik}, we get
\begin{align*}
F_{2,1} =& \frac{1}{2}\mathcal H^{(1)} \varphi^{\prime\prime}(\mathcal H^{(0)})\mathcal H^{(1)}+\frac{2}{3}\mathcal H^{(1)}  \varphi^{\prime\prime\prime}(\mathcal H^{(0)})\sum_{j=1}^d\frac{\partial V}{\partial x_j}(x_0)\nabla_{e_j}\\
& -\frac{2}{3}\sum_{j=1}^d\frac{\partial V}{\partial x_j}(x_0)\nabla_{e_j} \varphi^{\prime\prime\prime}(\mathcal H^{(0)})\mathcal H^{(1)}\\
& -\frac{1}{6}\sum_{j=1}^d\frac{\partial V}{\partial x_j}(x_0)\nabla_{e_j} \varphi^{IV}(\mathcal H^{(0)})\sum_{k=1}^d\frac{\partial V}{\partial x_k}(x_0)\nabla_{e_k}.
\end{align*}
Now we consider the corresponding Schwartz kernels and set $Z=Z^\prime=0$:
\[
F_{2,1}(0,0) =-\frac{1}{6}\sum_{j,k=1}^d\frac{\partial V}{\partial x_j}(x_0) \frac{\partial V}{\partial x_k}(x_0)  K_{\nabla_{e_j}\nabla_{e_k}\varphi^{IV}(\mathcal H^{(0)})}(0,0).
\]
By \eqref{e:nablaPhi-kernel}, we have 
\[
K_{\nabla_{e_j}\nabla_{e_k} \varphi (\mathcal H^{(0)})}(0,0)=\frac{1}{(2\pi)^{d}} \int_{\RR^{d}} (i\xi_j)(i\xi_k) \varphi(|\xi|^2+V(x_0))d\xi.
\]
Therefore, if $j\neq k$, we have 
\begin{equation}\label{e:nablaejkPhi-kernel}
K_{\nabla_{e_j}\nabla_{e_k} \varphi (\mathcal H^{(0)})}(0,0)=0,
\end{equation}
and, if $j=k$, integrating by parts, we get 
\begin{multline}\label{e:nablaejjPhi-kernel}
K_{\nabla_{e_j}^2 \psi^\prime (\mathcal H^{(0)})}(0,0)=-\frac{1}{(2\pi)^{d}} \int_{\RR^{d}} \xi_j^2 \psi^\prime(|\xi|^2+V(x_0))d\xi\\ =\frac 12 \frac{1}{(2\pi)^{d}} \int_{\RR^{d}}\psi(|\xi|^2+V(x_0))d\xi.
\end{multline}
We conclude that
\begin{equation}
\label{e:F2100}
F_{2,1}(0,0) =-\frac{1}{12}\sum_{j=1}^d\left(\frac{\partial V}{\partial x_j}(x_0)\right)^2 \frac{1}{(2\pi)^{d}} \int_{\RR^{d}}\varphi^{\prime\prime\prime}(|\xi|^2+V(x_0))d\xi.
\end{equation}


If $g$ is a polynomial of degree $\leq 2$, then, by iterating \eqref{e:comm-g-res}, we get
\begin{equation}
\begin{aligned}\label{e:comm-g-res2}
(\lambda-\mathcal H^{(0)})^{-1}g= &g(\lambda-\mathcal H^{(0)})^{-1}\\
&+\left(-2\sum_{k=1}^d  \frac{\partial g}{\partial Z_k}\nabla_{e_k}+ \Delta g\right)(\lambda-\mathcal H^{(0)})^{-2}\\
&+4\sum_{j,k=1}^d\frac{\partial^2 g}{\partial Z_j\partial Z_k}\nabla_{e_j}\nabla_{e_k} (\lambda-\mathcal H^{(0)})^{-3}.
\end{aligned}
\end{equation}

%
%
%

By \eqref{e:comm-g-res2}, we get
\begin{multline}
(\lambda-\mathcal H^{(0)})^{-1}\mathcal H^{(2)}(\lambda-\mathcal H^{(0)})^{-1}\\
\begin{aligned}\label{e:H0H2H0}
= &\mathcal H^{(2)}(\lambda-\mathcal H^{(0)})^{-2}\\
&+\left(-2\sum_{k=1}^d \frac{\partial \mathcal H^{(2)}}{\partial Z_k}\nabla_{e_k}+ \Delta \mathcal H^{(2)}\right)(\lambda-\mathcal H^{(0)})^{-3}\\
&+4 \sum_{j,k=1}^d\frac{\partial^2 \mathcal H^{(2)}}{\partial Z_j\partial Z_k} \nabla_{e_j}\nabla_{e_k} (\lambda-\mathcal H^{(0)})^{-4},
\end{aligned}
\end{multline}
where $\mathcal H^{(2)}$ is given by \eqref{e:H2} and $\frac{\partial \mathcal H^{(2)}}{\partial Z_k}$, $\Delta\mathcal H^{(2)}$ and $\frac{\partial^2 \mathcal H^{(2)}}{\partial Z_j\partial Z_k}$ are obtained from  $\mathcal H^{(2)}$ by applying the relevant differential operator to its coefficients: 
\begin{align*}
\frac{\partial \mathcal H^{(2)}}{\partial Z_k}=& \frac{2}{3}i\sum_{j,\ell=1}^d\frac{\partial B_{jk}}{\partial x_\ell}(x_0)  \,Z_\ell \nabla_{e_j}+\frac{2}{3}i\sum_{j,\ell=1}^d \frac{\partial B_{j\ell}}{\partial x_k}(x_0)  \,Z_\ell \nabla_{e_j}\\ &+\frac{1}{3}\sum_{j=1}^d\frac{\partial B_{jk}}{\partial x_j}(x_0)  +\frac{1}{2}\sum_{j,\ell=1}^dB_{k j}(x_0)B_{\ell j}(x_0)\,Z_\ell + \sum_{j=1}^d\frac{\partial^2 V}{\partial x_j\partial x_k}(x_0)Z_j,\\
\Delta\mathcal H^{(2)}=& -\frac{4}{3}i\sum_{j,k=1}^d\frac{\partial B_{jk}}{\partial x_k}(x_0) \, \nabla_{e_j}-\frac{1}{2}\sum_{j,k=1}^dB_{k j}(x_0)^2 -\sum_{j=1}^d\frac{\partial^2 V}{\partial x^2_j}(x_0).\\
\frac{\partial^2 \mathcal H^{(2)}}{\partial Z_j\partial Z_k}=&\frac{2}{3}i\sum_{m=1}^d\left( \frac{\partial B_{mj}}{\partial x_k}(x_0)  \nabla_{e_m}+\frac{\partial B_{mk}}{\partial x_j}(x_0)\nabla_{e_m}\right)\\ &+\frac{1}{2}\sum_{m=1}^dB_{j m}(x_0)B_{k m}(x_0) +\frac{\partial^2 V}{\partial x_j\partial x_k}(x_0).
\end{align*}
By \eqref{e:F22}, \eqref{e:H0H2H0} and \eqref{e:phik}, we get
\begin{multline}
F_{2,2}= \mathcal H^{(2)}\varphi^{\prime}(H^{(0)})
+\frac 12 \left(-2\sum_{k=1}^d \frac{\partial \mathcal H^{(2)}}{\partial Z_k}\nabla_{e_k}+ \Delta \mathcal H^{(2)}\right)\varphi^{\prime\prime}(H^{(0)})\\
+\frac 23\sum_{j,k=1}^d\frac{\partial^2 \mathcal H^{(2)}}{\partial Z_j\partial Z_k}\nabla_{e_j}\nabla_{e_k} \varphi^{\prime\prime\prime}(H^{(0)}).
\end{multline}
Now we consider the corresponding Schwartz kernels and set $Z=Z^\prime=0$:
\begin{multline}\label{e:F2200}
F_{2,2}(0,0)=-\frac 12 \left(\frac{1}{2}\sum_{j,k=1}^dB_{k j}(x_0)^2 +\sum_{j=1}^d\frac{\partial^2 V}{\partial x^2_j}(x_0)\right)K_{\varphi^{\prime\prime}(H^{(0)})}(0,0)\\
+\frac 13  \sum_{j,k=1}^d\left(\frac{1}{2}\sum_{m=1}^dB_{j m}(x_0)B_{k m}(x_0) +\frac{\partial^2 V}{\partial x_j\partial x_k}(x_0)\right) K_{\nabla_{e_j}\nabla_{e_k}\varphi^{\prime\prime\prime}(H^{(0)})}(0,0)\\
=-\frac{1}{6} \left(\frac{1}{2}\sum_{j,k=1}^dB_{k j}(x_0)^2 +\sum_{j=1}^d\frac{\partial^2 V}{\partial x^2_j}(x_0)\right)K_{\varphi^{\prime\prime}(H^{(0)})}(0,0).
\end{multline}
By \eqref{e:F2100} and \eqref{e:F2200}, we get the formula \eqref{e:exf2}.

\end{document}